\definecolor{citation}{rgb}{0,.40,.80}
\definecolor{reference}{rgb}{.80,0,.40}
\numberwithin{equation}{section}
\theoremstyle{plain}
\newtheorem{theorem}{Theorem}[section]
\newtheorem{lemma}[theorem]{Lemma}
\newtheorem{proposition}[theorem]{Proposition}
\theoremstyle{definition}
\newtheorem{definition}[theorem]{Definition}
\newtheorem{example}[theorem]{Example}
\newtheorem{remark}[theorem]{Remark}
\newtheorem{construction}[theorem]{Construction}
\setlist[itemize]{leftmargin=*, itemsep={2pt}}
\setlist[enumerate]{leftmargin=*, itemsep={2pt}}
\newcommand{\st}{\mid} 
\newcommand{\set}[1]{\left\{ \, #1 \, \right\}}
\newcommand{\Cat}{\mathrm{Cat}}
\newcommand{\ch}{\mathrm{ch}}
\newcommand{\Db}{\mathrm{D^b}}
\newcommand{\Dperf}{\mathrm{D}_{\perf}}
\newcommand{\Sch}{\mathrm{Sch}}
\newcommand{\Coh}{\mathrm{Coh}}
\newcommand{\Spec}{\mathrm{Spec}}
\newcommand{\Bl}{\mathrm{Bl}}
\DeclareMathOperator{\cyclicquotient}{\tilde{\mathrm{\mathbf{X}}}}
\DeclareMathOperator{\Pic}{Pic}
\DeclareMathOperator{\Hom}{Hom} 
\DeclareMathOperator{\Ext}{Ext}
\newcommand{\rank}{\mathrm{rank}}
\newcommand{\Knum}{\mathrm{K}_{\mathrm{num}}}
\newcommand{\perf}{\mathrm{perf}}
\newcommand{\Stab}{\mathrm{Stab}}
\newcommand{\on}{\operatorname}
\newcommand{\cO}{\mathcal{O}}
\newcommand{\cA}{\mathcal{A}}
\newcommand{\cC}{\mathscr{C}}
\newcommand{\cD}{\mathscr{D}}
\newcommand{\cL}{\mathcal{L}}
\newcommand{\cM}{\mathcal{M}}
\newcommand{\cZ}{\mathcal{Z}}
\newcommand{\rK}{\mathrm{K}}
\newcommand{\bC}{\mathbf{C}}
\newcommand{\bD}{\mathbf{D}}
\newcommand{\bH}{\mathbf{H}}
\newcommand{\bP}{\mathbf{P}}
\newcommand{\bQ}{\mathbf{Q}}
\newcommand{\bR}{\mathbf{R}}
\newcommand{\bX}{\mathbf{X}}
\newcommand{\bZ}{\mathbf{Z}}
\begin{document}

\title{Stability conditions on crepant resolutions of quotients of product varieties} 

\author{Alexander Perry}
\address{Department of Mathematics, University of Michigan, Ann Arbor, MI 48109 }
\email{arper@umich.edu}

\author{Saket Shah}
\address{Department of Mathematics, University of Michigan, Ann Arbor, MI 48109 }
\email{sakets@umich.edu}

\begin{abstract}
We construct stability conditions on crepant resolutions of certain quotients of product varieties, 
giving as a special case the first examples of stability conditions on strict Calabi--Yau varieties of arbitrary dimension. 
Along the way, we prove the crepant resolutions are derived equivalent to the corresponding quotient stacks, 
verifying an instance of a conjecture of Bondal and Orlov. 
\end{abstract}

\maketitle


\section{Introduction} 
Bridgeland \cite{bridgeland} introduced the notion of a stability condition on a triangulated category. 
Heuristically, it is the analog of an ample line bundle
in noncommutative algebraic geometry, in that it allows for the formulation of moduli spaces of semistable objects. 
This theory has led to many applications, including a description of groups of autoequivalences of K3 surfaces \cite{bridgeland-K3, bayer-bridgeland}, structure theorems for Donaldson--Thomas invariants \cite{toda, FT-DT}, and advances in our understanding of hyperk\"{a}hler geometry \cite{bayer-macri-nef, bayer-macri-mmp, stability-families, stability-GM}. 

In general, the problem of constructing stability conditions is notoriously difficult, 
although they are conjectured to exist on the bounded derived category of coherent sheaves $\Db(X)$ of any smooth projective variety $X$.  
When $\dim X = 2$ this was proved in \cite{bridgeland-K3, arcara-bertram}, but already when $\dim X = 3$ the existence of stability conditions is only known for special classes of varieties, like Fano threefolds \cite{Chunyi, Macri-et-al-Fano}, abelian threefolds and certain quotients thereof \cite{Piyaratne-abelian1, Piyaratne-abelian2, CY3-stability}, and quintic threefolds \cite{chunyi-quintic}. 
The case when $X$ is a strict Calabi--Yau variety --- meaning that $K_X = 0$ and $h^p(\cO_{X}) = 0$ for $0 < p < \dim X$, such as when $X$ is quintic threefold --- has played a particularly important role, 
in part due to motivations from physics. 

The aim of this paper is to construct stability conditions on crepant resolutions of certain quotient varieties, giving as a special case the first examples of stability conditions on strict Calabi--Yau varieties of dimension bigger than $3$. 
As an ingredient of independent interest, we also prove that these crepant resolutions are derived equivalent to the corresponding quotient stacks, verifying a stacky instance of a conjecture of Bondal and Orlov. 

Specifically, we work over 
the complex numbers and consider the following mild generalization of varieties constructed by Cynk and Hulek \cite{cynk-hulek}. 
Let $X_1, \dots, X_n$ be smooth varieties, each equipped with a $\bZ/2$-action whose fixed locus is a divisor. 
The group $(\bZ/2)^n$ acts on the product $X_1 \times \cdots \times X_n$ 
via the involution on each factor, and the quotient 
by the subgroup 
\begin{equation}
\label{Gn}
G_n = \set{ (a_1, \dots, a_n)  \, \vert \, \textstyle \sum a_i = 0 } \subset (\bZ/2)^n
\end{equation}
admits a crepant resolution (Proposition~\ref{chvar}), which we denote by 
\begin{equation*}
\cyclicquotient(X_1, \dots, X_n) \to (X_1 \times \cdots \times X_n)/G_n
\end{equation*} 
and call the \emph{iterated $\bZ/2$-quotient crepant resolution}. 
As the name suggests, $\cyclicquotient(X_1, \dots, X_n)$ is constructed inductively by crepantly resolving a $\bZ/2$-quotient of a product of two varieties at each step. 
On the other hand, the map from the quotient stack 
\begin{equation*}
[(X_1 \times \cdots \times X_n) / G_n] \to (X_1 \times \cdots \times X_n)/G_n
\end{equation*} 
can be viewed as a stacky crepant resolution. 
Our first result verifies a stacky instance of the Bondal--Orlov conjecture \cite{bondal-orlov} that all crepant resolutions of a variety with Gorenstein singularities are derived equivalent. 

\begin{theorem}
\label{theorem-derived-equivalence}
Let $X_1, \dots, X_n$ be smooth quasi-projective varieties, each equipped with a $\bZ/2$-action whose fixed locus is a divisor. 
Then there is a derived equivalence 
\begin{equation*} 
\Db(\cyclicquotient(X_1, \dots, X_n)) \simeq \Db([(X_1 \times \cdots \times X_n) /G_n]). 
\end{equation*} 
\end{theorem}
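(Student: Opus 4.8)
The plan is to induct on $n$, reducing the statement to a single geometric operation --- crepantly resolving the $\bZ/2$-quotient of a product of two smooth varieties --- and to prove the derived equivalence for that operation in a form strong enough to be iterated. The reduction rests on a group-theoretic identity: writing $g=(1,0,\dots,0,1)\in(\bZ/2)^n$, one has $g\in G_n$ and $G_n=\langle\,G_{n-1}\times\{0\},\,g\,\rangle$, which gives an isomorphism of stacks
\[
[(X_1\times\cdots\times X_n)/G_n]\;\cong\;\bigl[\,\bigl([(X_1\times\cdots\times X_{n-1})/G_{n-1}]\times X_n\bigr)/(\bZ/2)\,\bigr],
\]
where the outer $\bZ/2$ acts by the residual involution on the $(n-1)$-fold quotient stack (the image of $(1,0,\dots,0)$) on the first factor and by $\iota_n$ on $X_n$. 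On the resolution side, $\cyclicquotient(X_1,\dots,X_n)=\cyclicquotient(\cyclicquotient(X_1,\dots,X_{n-1}),X_n)$ by construction, using the residual involution on $\cyclicquotient(X_1,\dots,X_{n-1})$ provided by Proposition~\ref{chvar}. Accordingly I would prove, by induction, the refinement of Theorem~\ref{theorem-derived-equivalence} that the induction actually requires: there is a derived equivalence intertwining the residual $\bZ/2$-actions on the two sides. Granting this for $n-1$, the inductive equivalence is $\bZ/2$-equivariant and so, by descent of equivariant Fourier--Mukai equivalences to quotient stacks, induces an equivalence $\Db\bigl([(\cyclicquotient(X_1,\dots,X_{n-1})\times X_n)/(\bZ/2)]\bigr)\simeq\Db([(X_1\times\cdots\times X_n)/G_n])$. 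It then remains to treat a single step, which is also the case $n=2$.

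For two smooth varieties $A,B$ with involutions $\iota_A,\iota_B$ having fixed divisors $D_A,D_B$, set $V=A\times B$, $\tau=\iota_A\times\iota_B$, and $F=D_A\times D_B$, the fixed locus of $\tau$, of codimension $2$. Let $\widetilde V=\Bl_F V$ with exceptional divisor $E=\bP(N_{F/V})$, a $\bP^1$-bundle over $F$; the lift $\widetilde\tau$ fixes $E$ and acts by $-1$ on its normal bundle, so $\cyclicquotient(A,B)=\widetilde V/\widetilde\tau$ is smooth and $E$ maps isomorphically to a smooth divisor $\bar E$. The geometric heart of the argument is the smooth Deligne--Mumford stack $\cV:=[\widetilde V/\widetilde\tau]$, which carries two natural maps: the stacky blowup $\cV\to[V/\tau]$ along the smooth codimension-$2$ center $[F/\tau]=F\times B(\bZ/2)$, and the coarse-space map $\cV\to\cyclicquotient(A,B)$ exhibiting $\widetilde V\to\cyclicquotient(A,B)$ as the double cover branched along $\bar E$.

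I would compare two semiorthogonal decompositions of $\Db(\cV)$. The equivariant Orlov blowup formula for the first map gives $\Db(\cV)=\langle\,\Db([F/\tau]),\,\Db([V/\tau])\,\rangle$, and since $\tau$ fixes $F$ we have $\Db([F/\tau])=\Db(F\times B(\bZ/2))\simeq\Db(F)\oplus\Db(F)$, the two copies indexed by the characters of $\bZ/2$. The branched-double-cover decomposition for the second map gives $\Db(\cV)=\langle\,\Db(\bar E),\,\Db(\cyclicquotient(A,B))\,\rangle$, and $\Db(\bar E)=\Db(E)\simeq\Db(F)\oplus\Db(F)$ via the projective-bundle formula. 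In each decomposition the complementary ``small'' part is generated by objects supported on $E=\bar E$, while the big admissible subcategory is $\Db([V/\tau])$ in the first case and $\Db(\cyclicquotient(A,B))$ in the second. The strategy is to show that these two $E$-supported subcategories coincide inside $\Db(\cV)$: the Orlov generators $i_{E*}\bigl(\cO_E(-1)\otimes p^*(-)\otimes\chi\bigr)$ and the double-cover generators $i_{E*}\bigl(\cO_E(j)\otimes p^*(-)\bigr)$ are related by an explicit mutation on the $\bP^1$-bundle $E$. Once the two subcategories are identified, their common orthogonal yields the sought equivalence $\Db([V/\tau])\simeq\Db(\cyclicquotient(A,B))$.

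I expect the decisive difficulty to be exactly this last cancellation: matching the two copies of $\Db(F)$ coming from the blowup with the two coming from the branched cover, as honest subcategories of $\Db(\cV)$, keeping track of the twists and characters precisely enough that the orthogonals agree on the nose rather than merely abstractly. Equally important for the induction is \emph{naturality}: I must arrange the resulting kernel to commute with the residual involution on $\cyclicquotient(A,B)$ coming from the remaining $\bZ/2$-factor, so the equivalence is equivariant and can be fed into the next step, and I must verify the (soft but fiddly) descent of equivariant equivalences to the quotient stacks. A useful consistency check and alternative route is that, transversally to $F$, the local model is $\bZ/2$ acting on $\mathbb{A}^2$ by $-1$ --- the $A_1$-surface singularity --- where the statement is the classical two-dimensional McKay correspondence; the global equivalence is then a relative version of this correspondence over $F$.
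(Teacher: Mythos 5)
Your inductive framework (reduce to one $\bZ/2$-quotient step, proved in an equivariant form so it can be iterated) is the same as the paper's, which implements the reduction with $\infty$-categorical tools (the K\"unneth formula $\Dperf(X)\otimes\Dperf(Y)\simeq\Dperf(X\times Y)$ and transitivity of invariants, Lemma~\ref{trans}) precisely to avoid the coherence issues you flag in ``descent of equivariant kernels.'' The genuine gap is in your base case. Your plan is to show that the two ``small'' subcategories of $\Db(\cV)$ --- the Orlov blowup component and the root-stack component --- \emph{coincide} as subcategories, so that their common orthogonal identifies $\Db([V/\tau])$ with $\Db(\cyclicquotient(A,B))$. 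This claim is false, and the failure is visible already in the local model $V=\mathbb{A}^2$, $\tau=-1$. On $\cE=[E/(\bZ/2)]\cong E\times B(\bZ/2)$ the derived category splits as an \emph{orthogonal direct sum} $\Db(E)_+\oplus\Db(E)_-$ according to the character of the stabilizer, and the two components you want to match live differently with respect to this splitting: the blowup component is generated by $i_{E*}$ of a \emph{fixed} twist $\cO_E(-1)$ carrying \emph{both} characters, while the root-stack component is $i_{E*}$ of \emph{all} twists carrying only the \emph{odd} character (the normal bundle $N_{E/\widetilde V}$ has weight $-1$, and the root-stack semiorthogonal decomposition uses only nonzero weights). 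Concretely, $P_+=i_{E*}(\cO_E(-1))$ with trivial character lies in the blowup component but not in the root component: any object of the latter is $i_{E*}(G)$ with $G$ purely odd, and an isomorphism of sheaves $i_{E*}(\cO_E(-1))_+\cong i_{E*}(G)_-$ is impossible since the stabilizer weights on fibers differ. No ``mutation on the $\bP^1$-bundle $E$'' can repair this, because mutations inside $\Db(\cE)$ never mix the two orthogonal summands $\Db(E)_\pm$; the obstruction is a parity mismatch, not a bookkeeping of twists. Consequently the two big parts also differ as subcategories (e.g.\ $\rho^*\cO_y$ for $y$ on the exceptional curve is not right-orthogonal to the blowup component, as one checks via $\rho_{E*}$ killing odd weights), so the ``common orthogonal'' argument has nothing to cancel.

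What is true --- and is the actual content of the base case --- is that the \emph{composite} $\pi_*\circ\rho^*\colon\Db(\cyclicquotient(A,B))\to\Db([V/\tau])$, i.e.\ pull back along the coarse map $\cV\to\cyclicquotient(A,B)$ and push forward along the stacky blowup $\cV\to[V/\tau]$, is an equivalence. Proving this is not formal: it is exactly the (relative, $\bZ/2$) derived McKay correspondence, which the paper does not rederive but imports as Theorem~\ref{cyclic} from Krug--Ploog--Sosna (building on Bridgeland--King--Reid), in the equivariance-tracking form needed for the induction (Theorem~\ref{theorem-kps}). Your two semiorthogonal decompositions show that the two categories have ``abstractly matching pieces,'' but semiorthogonal decompositions with equivalent complements do not by themselves produce an equivalence of the remaining pieces; bridging that gap requires the BKR/KPS-type argument, which is the step your proposal leaves unproven.
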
 

To prove the theorem, we use the inductive description of $\cyclicquotient(X_1, \dots, X_n)$ as a crepant resolution of a $\bZ/2$-quotient. 
By interpreting $\Db([(X_1 \times \cdots \times X_n) /G_n])$ as the $G_n$-invariant category of $\Db(X_1 \times \cdots \times X_n)$ and using a higher-categorical argument, this reduces us to an instance of the stacky Bondal--Orlov conjecture for a cyclic quotient singularity, 
which has been treated in \cite{KPS} building on ideas from the derived McKay correspondence established in \cite{BKR}. 

When one of the $X_i$ admits a $\bZ/2$-invariant stability condition and the rest of the $X_i$ are curves,  
we prove the existence of stability conditions on the corresponding crepant resolution. 

\begin{theorem}
\label{theorem-stability-conditions} 
Let $X$ be a smooth projective variety equipped with a $\bZ/2$-action whose fixed locus is a divisor. 
Assume $X$ admits a stability condition $\sigma$ which is $\bZ/2$-invariant. 
Let $C_1, \dots, C_n$ be smooth projective curves each of which is equipped with a nontrivial involution. 
\begin{enumerate}
\item There is an associated stability condition $\tilde{\sigma}$ on $\cyclicquotient(X, C_1, \dots, C_n)$, which is numerical if $\sigma$ is so. 
\item \label{stability-conditions-semi-geometric} 
If the Albanese morphism of $X$ is finite, all $C_i$ have genus at least $1$, and $\sigma$ is numerical, 
then the skyscraper sheaves of points on $\cyclicquotient(X, C_1, \dots, C_n)$ are all semistable of the same phase with respect to $\tilde{\sigma}$. 
\end{enumerate} 
\end{theorem}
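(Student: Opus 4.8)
The plan is to use the derived equivalence of Theorem~\ref{theorem-derived-equivalence} to transport the problem to the product $P = X \times C_1 \times \cdots \times C_n$, where a stability condition can be built by hand. Writing $G = G_{n+1} \subset (\bZ/2)^{n+1}$, Theorem~\ref{theorem-derived-equivalence} gives
\begin{equation*}
\Db(\cyclicquotient(X, C_1, \dots, C_n)) \simeq \Db([P/G]) \simeq \Db(P)^{G},
\end{equation*}
the last category being the $G$-equivariant category of $\Db(P)$. So it suffices to produce a $G$-invariant stability condition on $\Db(P)$ and induce it onto the equivariant category.

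First I would equip each curve $C_i$ with a geometric stability condition $\sigma_i$, namely the heart $\Coh(C_i)$ with central charge $E \mapsto -\deg(E) + \sqrt{-1}\,\rank(E)$ when $g(C_i) \ge 1$, and any geometric representative on $\bP^1$. Since the involution preserves rank and degree, $\sigma_i$ is automatically $\bZ/2$-invariant, as is the given $\sigma$ on $X$ by hypothesis. The technical core of part~(1) is then to assemble these into a single \emph{product} stability condition $\sigma \boxtimes \sigma_1 \boxtimes \cdots \boxtimes \sigma_n$ on $\Db(P)$: one tensors the hearts and multiplies the central charges, and must verify the Harder--Narasimhan property, local finiteness, and --- the real obstacle --- the support property for the product. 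Curves are exactly what make this tractable, since the slicing of each $\sigma_i$ is rigid enough that the product heart is well behaved; I would carry out the construction inductively, adjoining one curve factor at a time. The resulting stability condition is $(\bZ/2)^{n+1}$-invariant, hence $G$-invariant, and numerical whenever $\sigma$ is. Feeding it into the induction procedure of Macr\`i--Mehrotra--Stellari produces a stability condition $\sigma_G$ on $\Db(P)^{G}$, and transporting along the equivalence above yields $\tilde\sigma$ on $\cyclicquotient(X, C_1, \dots, C_n)$. This proves~(1).

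For part~(2), note first that all skyscraper sheaves on the connected variety $\cyclicquotient(X, C_1, \dots, C_n)$ have the same numerical class, hence the same $\tilde\sigma$-central charge; so once they are shown semistable they automatically share a common phase, and it remains only to prove semistability. The hypotheses --- finite Albanese for $X$ and $g(C_i)\ge 1$ for all $i$ --- are precisely what guarantee that $P$ has finite Albanese and that the product stability condition $\sigma_P := \sigma \boxtimes \sigma_1 \boxtimes \cdots \boxtimes \sigma_n$ is \emph{geometric}: every skyscraper $\cO_{\tilde q}$, $\tilde q \in P$, is $\sigma_P$-semistable of one fixed phase $\phi$, and in particular lies in the heart. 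I would deduce geometricity of $\sigma_P$ from geometricity of the factors, with the finite Albanese hypothesis supplying geometricity of $\sigma$ on $X$ and $g(C_i) \ge 1$ supplying it for each $\sigma_i$.

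Finally I would track skyscrapers through the equivalence. The equivalence of Theorem~\ref{theorem-derived-equivalence} is of Fourier--Mukai type built from the derived McKay correspondence \cite{BKR}, so the skyscraper $\cO_p$ of a point $p \in \cyclicquotient(X, C_1, \dots, C_n)$ corresponds to a $G$-equivariant sheaf on $P$ supported on the associated (possibly non-reduced, length-$|G|$) cluster. Forgetting the equivariant structure gives a finite-length sheaf on $P$ whose Jordan--H\"older factors in $\Coh(P)$ are skyscrapers $\cO_{\tilde q}$, all of phase $\phi$ by geometricity; such an object is $\sigma_P$-semistable of phase $\phi$. Since any subobject in the equivariant heart forgets to a subobject in $\Coh(P)$, semistability of the forgetful image forces $\sigma_G$-semistability of the equivariant object, hence $\tilde\sigma$-semistability of $\cO_p$ of phase $\phi$. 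As $p$ was arbitrary, all skyscrapers are $\tilde\sigma$-semistable of the common phase $\phi$, proving~(2). I expect the main difficulty to be the support property for the product stability condition in part~(1); once geometricity and the Fourier--Mukai description of skyscrapers are in hand, the remaining steps are comparatively formal.
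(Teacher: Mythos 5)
Your high-level architecture is the same as the paper's: build an invariant stability condition on the product $P = X \times C_1 \times \cdots \times C_n$, descend it to $[P/G_{n+1}]$ via the Polishchuk/Macr\`i--Mehrotra--Stellari induction (Theorem~\ref{theorem-stability-G}), transport through the derived equivalence, and identify images of skyscrapers with structure sheaves of $G$-clusters filtered by skyscrapers (Remark~\ref{remark-Phi-skyscraper}). The genuine gap is in what you call the technical core of part~(1): the construction ``tensor the hearts and multiply the central charges'' does not produce a pre-stability condition. Multiplying central charges destroys the positivity axiom --- for instance $e^{3\pi i/4}\cdot e^{3\pi i/4} = -i$, so the product of two values in $\bH$ can land in the open lower half-plane, outside $\bH \cup \bR_{<0}$ --- and the extension closure of $\cA \boxtimes \cA_1 \boxtimes \cdots \boxtimes \cA_n$ is not known to be the heart of a bounded t-structure. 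Indeed, producing stability conditions on a product from stability conditions on the factors is a well-known open problem, and what makes curves tractable is not rigidity of their slicings but Liu's theorem (Theorem~\ref{theorem-product}, \cite{products}), whose construction has an entirely different shape: one takes the Abramovich--Polishchuk heart $\cA_C = \set{E \st p_*(E \otimes q^*L^n) \in \cA \text{ for } n \gg 0}$ \cite{AP, polishchuk}, forms the polynomial $n \mapsto Z(p_*(E \otimes q^*L^n))$, uses its leading coefficient as a weak stability function, tilts at slope zero, and recombines the coefficients into a new central charge (Theorem~\ref{theorem-ZSL}). Nothing here is a product of central charges; and because this construction depends on choices (the ample $L$ on the curve, pushforward along the projection), the $(\bZ/2)^{n+1}$-invariance you call automatic actually requires an argument, which is the content of Lemmas~\ref{lemma-ZSL-equivariant} and~\ref{lemma-product-invariant}.

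There is a second, smaller gap in part~(2): you propose to deduce geometricity of the stability condition on $P$ from geometricity of its ``factors,'' but no such factor-to-product implication is available (and in your setup the product stability condition itself was never constructed). The paper's route avoids this entirely: since $X$ has finite Albanese morphism and each $C_i$ has genus at least $1$, the product $P$ has finite Albanese morphism, and the theorem of Fu--Li--Zhao \cite{FLZ} then says that \emph{any} numerical stability condition on $P$ --- in particular the one coming from Liu's construction, about which one need know nothing further --- is geometric. With that substitution, your final step (forget $\Phi(\cO_p)$ to a sheaf filtered by skyscrapers of common phase $\phi$, hence semistable, then use that semistability of the pullback is equivalent to semistability in the equivariant category, the last statement of Theorem~\ref{theorem-stability-G}) is exactly the paper's argument and is correct.
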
 

\begin{remark}
On smooth projective varieties, there conjecturally exist stability conditions which are \emph{geometric} in the sense that skyscraper sheaves of points are all stable of the same phase. 
In the situation of~\eqref{stability-conditions-semi-geometric}, the stability conditions we construct on $\cyclicquotient(X, C_1, \dots, C_n)$ are nearly geometric, but not quite since the skyscraper sheaves of some points are strictly semistable. 
\end{remark} 

The stability condition $\tilde{\sigma}$ given by Theorem~\ref{theorem-stability-conditions} admits an explicit inductive description. 
The main ingredient in the construction 
is a result of Liu \cite{products}, which from a stability condition on a smooth projective variety produces one on the product with any smooth projective curve. 
In particular, applying this iteratively we obtain a stability condition on $X \times C_1 \times \cdots \times C_n$. 
Using results from \cite{polishchuk, inducing-stability}, we show this stability condition descends to one on the quotient stack $[(X \times C_1 \times \cdots \times C_n)/G_{n+1}]$, which by Theorem~\ref{theorem-derived-equivalence} is derived equivalent to the crepant resolution $\cyclicquotient(X, C_1, \dots, C_n)$. 
Finally, we deduce claim~\eqref{stability-conditions-semi-geometric} in Theorem~\ref{theorem-stability-conditions} using a recent result of Fu, Li, and Zhao \cite{FLZ}, which says that any stability condition on a variety with finite Albanese morphism is necessarily geometric. 

In the special cases of Theorem~\ref{theorem-stability-conditions} where either $X$ and the $C_i$ are elliptic curves, or when $X$ is a K3 surface and the $C_i$ are elliptic, 
we obtain the promised examples of stability conditions on strict Calabi--Yau varieties of arbitrary dimension. 

\begin{example}[Cynk--Hulek varieties] 
The standard stability condition on a smooth projective curve is fixed by any automorphism of the curve (Example~\ref{example-stability-curve}).  
Thus for $n \geq 2$ elliptic curves $E_1, \dots, E_n$ (equipped with the natural involution), Theorem~\ref{theorem-stability-conditions} gives numerical stability conditions on $\cyclicquotient(E_1, \dots, E_n)$ such that skyscraper sheaves of points are all semistable of the same phase. 
The variety $\cyclicquotient(E_1, \dots, E_n)$ is an $n$-dimensional simply connected strict Calabi--Yau variety, 
first introduced by Cynk and Hulek \cite{cynk-hulek} as a testing ground for modularity conjectures in arithmetic geometry.  
In the case $n = 3$, such Calabi--Yau threefolds were earlier considered by Borcea~\cite{borcea-CM} and by Vafa and Witten \cite{vafa-witten} from the perspectives of complex multiplication and mirror symmetry. 
\end{example}

\begin{example}[Borcea--Voisin varieties] 
Let $X$ be a K3 surface with an anti-symplectic involution. 
The fixed locus of the corresponding $\bZ/2$-action is necessarily a divisor in $X$, 
and $X$ admits a $\bZ/2$-invariant stability condition (Example~\ref{example-invariant-stability-surface}). 
In particular, if $E_1, \dots, E_n$ are elliptic curves, then 
Theorem~\ref{theorem-stability-conditions} gives numerical stability conditions on $\cyclicquotient(X, E_1, \dots, E_n)$. 
The variety $\cyclicquotient(X, E_1, \dots, E_n)$ is an $(n+2)$-dimensional strict Calabi--Yau variety, which 
in the case $n = 1$ was studied by Borcea~\cite{borcea} and Voisin~\cite{voisin} from the perspective of mirror symmetry. 
\end{example}

\begin{remark}
There are variants of the construction $\cyclicquotient(X_1, \dots, X_n)$ where the role of $\bZ/2$ is replaced by $\bZ/3$, $\bZ/4$, or $\bZ/6$ \cite{cynk-hulek, burek}. 
Although we do not investigate it in this paper, we expect that the analog of Theorem~\ref{theorem-derived-equivalence} holds in each of these settings; if so, then the arguments behind Theorem~\ref{theorem-stability-conditions} would also directly apply. 
In particular, if $E_1, \dots, E_n$ are elliptic curves which are all equipped with an automorphism of order $d$ where $d=3$, $4$, or $6$, 
then we would obtain stability conditions on the Calabi--Yau crepant resolution of the quotient of $E_1 \times \cdots \times E_n$ by 
the subgroup $\set{ (a_1, \dots, a_n)  \, \vert \, \sum a_i = 0 } \subset (\bZ/d)^n$. 
\end{remark} 

\subsection*{Related work} 
After the release of the first version of this paper, we learned that in unpublished work Yiran Cheng independently proved the existence of stability conditions on Cynk--Hulek varieties, by a similar argument to ours. 

\subsection*{Organization of the paper}
In \S\ref{section-crepant-resolutions} we describe the construction and geometry of iterated $\bZ/2$-quotient crepant resolutions. 
In \S\ref{section-bondal-orlov} we prove Theorem~\ref{theorem-derived-equivalence} after some categorical preliminaries. 
In \S\ref{section-stability} we study stability conditions on quotients of products of curves, and in particular prove Theorem~\ref{theorem-stability-conditions}. 

\subsection*{Conventions}
With the exception of \S\ref{section-enhanced-categories} and \S\ref{section-group-actions}, we work over the complex numbers to 
conform with the assumptions of several references which we cite. 
However, one may check that our results also hold over algebraically closed fields of suitable characteristic. 

All functors are derived. In particular, for a morphism $f \colon X \to Y$ we
write $f^*$ and $f_*$ for the derived pullback and pushforward, and for $E, F
\in \Db(X)$ we write $E \otimes F$ for their derived tensor product.

\subsection*{Acknowledgements}
We are grateful to Andrei C\u{a}ld\u{a}raru and Michel Van den Bergh for interesting discussions related to this work. 

During the preparation of this paper, the first author was partially supported by NSF grants DMS-2112747, DMS-2052750, and DMS-2143271, and a Sloan Research Fellowship, and 
the second author was partially supported by NSF grant DMS-2052750. Part of this work was also completed while the first author was in residence at the Simons Laufer Mathematical Sciences Institute in Spring 2024. 
	
	
	\section{Crepant resolutions} 
	\label{section-crepant-resolutions} 
			
The purpose of this section is to construct iterated $\bZ/2$-quotient crepant resolutions and discuss their geometry. 

\subsection{Construction} 
We will be interested in crepant resolutions of singular varieties of the following form. 
\begin{lemma}
\label{lemma-gorenstein}
Let $X_1, \dots, X_n$ be smooth varieties equipped with $\bZ/2$-actions whose fixed loci are divisors. 
Then the quotient $(X_1 \times \cdots \times X_n)/G_n$ is Gorenstein, where $G_n \subset (\bZ/2)^n$ is the subgroup defined in~\eqref{Gn}. 
\end{lemma}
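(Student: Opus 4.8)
The plan is to check the Gorenstein property on completed local rings and thereby reduce to a statement about linear quotient singularities, to which a classical criterion of Watanabe applies. Write $Y = X_1 \times \cdots \times X_n$ and $G = G_n$, so that the claim is that $Y/G$ is Gorenstein. Since Gorensteinness can be tested after completion, and since $G$ is finite and we work in characteristic $0$, I would first use the standard description of the completed local rings of a finite quotient together with linearization (Cartan's lemma, or equivalently the \'etale slice theorem): for a point $y = (x_1, \dots, x_n) \in Y$ with stabilizer $G_y = \mathrm{Stab}_G(y)$, the completed local ring of $Y/G$ at the image of $y$ is the ring of invariants $(\widehat{\mathcal{O}}_{Y,y})^{G_y}$, and after linearizing the $G_y$-action this is the completion at the origin of the linear quotient $T_y Y / G_y$, where $G_y$ acts on $T_y Y = \bigoplus_i T_{x_i} X_i$ through its differential. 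Thus it suffices to prove that each such linear quotient $T_y Y / G_y$ is Gorenstein.

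Next I would analyze this linear action. An element $g = (a_1, \dots, a_n) \in (\bZ/2)^n$ lies in $G_y$ precisely when, for every $i$ with $a_i = 1$, the point $x_i$ is fixed by the involution $\tau_i$ on $X_i$. By hypothesis the fixed locus of $\tau_i$ is a divisor, and such fixed loci are smooth in characteristic $0$, so at a fixed point $\tau_i$ acts on $T_{x_i} X_i$ with $+1$-eigenspace the tangent space to the fixed divisor (of codimension $1$) and $-1$-eigenspace the normal line; in particular $\det(\mathrm{d}\tau_i) = -1$. Consequently $g$ acts on $T_y Y$ with determinant $\prod_{i : a_i = 1}(-1) = (-1)^{\sum_i a_i}$. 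Since $g \in G_n$ forces $\sum_i a_i \equiv 0 \pmod 2$, every element of $G_y$ has determinant $1$, that is $G_y \subset \mathrm{SL}(T_y Y)$.

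The remaining point is that $G_y$ contains no pseudo-reflections: a nontrivial $g = (a_1,\dots,a_n) \in G_y$ acts with $-1$-eigenspace of dimension $\#\{i : a_i = 1\}$, so being a pseudo-reflection would require exactly one $a_i = 1$, contradicting the parity constraint $\sum_i a_i \equiv 0$. With both conditions established, namely $G_y \subset \mathrm{SL}(T_y Y)$ and the absence of pseudo-reflections, Watanabe's theorem shows that $T_y Y / G_y$ is Gorenstein, which by the reduction above completes the proof.

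I expect the main subtlety to be the local reduction step rather than the linear algebra: one must carefully justify passing from $Y/G$ to the linear quotients $T_y Y / G_y$ via linearization of the stabilizer action, and then ensure that the determinant and pseudo-reflection computations use exactly the two hypotheses in play. These are, first, that each fixed locus is a divisor (which gives $\det(\mathrm{d}\tau_i) = -1$ with a one-dimensional $-1$-eigenspace), and second, that $G = G_n$ is the kernel of the sum map $(\bZ/2)^n \to \bZ/2$ (which is what forces the stabilizer determinants to equal $1$). The latter is precisely the reason for singling out the subgroup $G_n$, since it is what makes the canonical class descend.
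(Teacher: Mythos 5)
Your proof is correct and follows essentially the same route as the paper's: local linearization via Cartan's lemma followed by Watanabe's criterion for linear quotients. You spell out details the paper leaves implicit (the reduction to stabilizers $G_y$ and the determinant computation showing $G_y \subset \mathrm{SL}(T_y Y)$); the only superfluous step is the pseudo-reflection check, since the direction of Watanabe's theorem actually used---finite subgroups of $\mathrm{SL}$ have Gorenstein invariant rings---needs no such hypothesis (indeed, pseudo-reflections never lie in $\mathrm{SL}$).
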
 

\begin{proof}
By a lemma of Cartan \cite{cartan}, the action of $\bZ/2$ on $X_i$ can be locally linearized, so we may reduce to the case where the $X_i$ are affine spaces and the group actions are linear. 
Then it follows from the criterion of \cite{watanabe} that 
the quotient $(X_1 \times \cdots \times X_n)/G_n$ is Gorenstein. 
\end{proof} 

We follow Cynk and Hulek \cite{cynk-hulek} (who considered a slightly less general situation) for the construction of crepant resolutions of varieties as in Lemma~\ref{lemma-gorenstein}. 
The key observation is as follows. 

	\begin{lemma}
		\label{induction}
		Let $X_1$ and $X_2$ be smooth varieties equipped with $\bZ/2$-actions whose fixed loci are divisors $D_1 \subset X_1$ and $D_2 \subset X_2$. 
		We consider $X_1 \times X_2$ with the product $\bZ/2 \times \bZ/2$-action, and the blowup $\Bl_{D_1 \times D_2} X_1 \times X_2$ with the induced $\bZ/2 \times \bZ/2$-action. 
		Let $G \subset \bZ/2 \times \bZ/2$ be the diagonal copy of $\bZ/2$. 
		Then there is a commutative diagram 
                        \begin{center}
            \begin{tikzcd}
		(\Bl_{D_1 \times D_2} X_1 \times X_2)/G \arrow[r, "\pi"] \ar{d}[swap]{q} & (X_1 \times X_2)/G \ar[d, "\bar{q}"] \\ 
		\Bl_{D_1 \times D_2} (X_1/(\bZ/2) \times X_2/(\bZ/2)) \ar[r, "\bar{\pi}"] & X_1/(\bZ/2) \times X_2/(\bZ/2). 
            \end{tikzcd}
            \end{center}
            such that: 
            \begin{enumerate}
            \item  $\pi$ is a crepant resolution which is equivariant for the induced $\bZ/2 \cong (\bZ/2 \times \bZ/2)/G$ actions on the source and target. 
            \item The fixed locus of the induced $\bZ/2$-action on $(\Bl_{D_1 \times D_2} X_1 \times X_2)/G$ is a divisor, isomorphic to 
            $D_1 \times X_2/(\bZ/2) \sqcup X_1/(\bZ/2) \times D_2$. 
            \item $q$ and $\bar{q}$ are the quotient morphisms for the induced actions of $\bZ/2$. 
            \end{enumerate} 

	\end{lemma}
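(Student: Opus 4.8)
The plan is to build everything from a single equivariant blowup and then reduce all three assertions to an explicit computation in local coordinates, using Cartan's linearization lemma (as in the proof of Lemma~\ref{lemma-gorenstein}) to put the $\bZ/2\times\bZ/2$-action into normal form. Write $Y := X_1\times X_2$ and $\tilde Y := \Bl_{D_1\times D_2} Y$. Since $D_1\times D_2$ is $\bZ/2\times\bZ/2$-invariant, the blowdown $b\colon \tilde Y\to Y$ is $\bZ/2\times\bZ/2$-equivariant; quotienting $\tilde Y, Y$ by $G$ gives the top row $\tilde W := \tilde Y/G \xrightarrow{\pi} W := Y/G$, and quotienting further by $H := (\bZ/2\times\bZ/2)/G$ (equivalently, quotienting $\tilde Y, Y$ by the full group) gives the bottom row, with $q,\bar q$ the residual $H$-quotients. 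Commutativity of the square and the $H$-equivariance of $\pi$ in~(1) are then immediate from the functoriality of these quotients. By Cartan, near a point of $D_1\times D_2$ we may choose coordinates $(x,u)$ on $X_1$ and $(y,v)$ on $X_2$ with $\iota_1\colon u\mapsto -u$, $\iota_2\colon v\mapsto -v$, $D_1=\{u=0\}$, $D_2=\{v=0\}$, so that $G$ acts by $(x,u,y,v)\mapsto(x,-u,y,-v)$ with fixed locus exactly $D_1\times D_2=\{u=v=0\}$, of codimension two.

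For~(1) I would work in the two standard charts of $\tilde Y$. Writing $v=uw$ in the chart $\{s\neq 0\}$ and $u=zv$ in $\{t\neq 0\}$, direct substitution shows $G$ acts by $(x,u,y,w)\mapsto(x,-u,y,w)$ and $(x,z,y,v)\mapsto(x,z,y,-v)$, so its fixed locus on $\tilde Y$ is precisely the exceptional divisor $E$, and the $G$-quotient is smooth in each chart (invariant coordinates $(x,u^2,y,w)$ and $(x,z,y,v^2)$). Hence $\tilde W$ is smooth and $\pi$ is a resolution. Crepancy follows from the relations $K_{\tilde Y}=b^*K_Y+E$ (blowup of a codimension-two center), $K_Y=p^*K_W$ (the quotient $p\colon Y\to W$ is \'etale in codimension one, as $\mathrm{Fix}(G)$ has codimension two), and $K_{\tilde Y}=\tilde p^*K_{\tilde W}+E$ (the double cover $\tilde p\colon \tilde Y\to\tilde W$ is branched along $E$): combining these via $p\circ b=\pi\circ\tilde p$ gives $\tilde p^*K_{\tilde W}=\tilde p^*\pi^*K_W$, whence $K_{\tilde W}=\pi^*K_W$. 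Equivalently, one may observe that the transverse local model is the minimal, hence crepant, resolution of the $A_1$-singularity $\mathbb{A}^2/\{\pm 1\}$.

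The main work is~(2), and this is the step I expect to be the crux. The nontrivial element of $H$ is represented by both $\iota_1\times\id$ and $\id\times\iota_2$, and a short orbit computation shows that the $H$-fixed locus on $\tilde W$ is the image under $\tilde p$ of $\mathrm{Fix}(\iota_1\times\id)\cup\mathrm{Fix}(\id\times\iota_2)$: indeed $Gz$ is $H$-fixed iff $(\iota_1\times\id)z\in Gz$, i.e.\ iff $z$ is fixed by $\iota_1\times\id$ or by $\id\times\iota_2$. The delicate point is that each of these lifts acts on the exceptional $\mathbb{P}^1$-bundle $E=\mathbb{P}(N_{D_1/X_1}\oplus N_{D_2/X_2})$ fixing exactly its two sections $[1:0]$ and $[0:1]$; computing in charts, $\mathrm{Fix}(\iota_1\times\id)$ is the disjoint union of the strict transform of $D_1\times X_2$ (meeting $E$ along $[0:1]$) and the section $[1:0]$, while $\mathrm{Fix}(\id\times\iota_2)$ is the strict transform of $X_1\times D_2$ (meeting $E$ along $[1:0]$) together with the section $[0:1]$. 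Thus the two extraneous sections are absorbed into the strict transforms, and the union is the disjoint union of the smooth divisors $\widetilde{D_1\times X_2}\cong D_1\times X_2$ and $\widetilde{X_1\times D_2}\cong X_1\times D_2$ (each strict transform is isomorphic to the original, since the center is Cartier inside it). Finally $G$ preserves each divisor, acting as $\id_{D_1}\times\iota_2$ on the first and $\iota_1\times\id_{D_2}$ on the second, so passing to the $G$-quotient identifies the $H$-fixed locus with $D_1\times X_2/(\bZ/2)\sqcup X_1/(\bZ/2)\times D_2$, a divisor, as claimed.

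For~(3) I would identify the bottom-left corner $\tilde W/H=\tilde Y/(\bZ/2\times\bZ/2)$ with $\Bl_{D_1\times D_2}(X_1/(\bZ/2)\times X_2/(\bZ/2))$: in the chart above, the invariants of the full group are generated by $x,y,u^2,w^2$, and setting $U=u^2$, $V=v^2$, $W=w^2$ one checks $V=UW$, which is exactly the defining relation of the corresponding chart of the blowup of $\{U=V=0\}$ in $X_1/(\bZ/2)\times X_2/(\bZ/2)$. This realizes $q$ and $\bar q$ as the quotient morphisms for the residual $H\cong\bZ/2$ and makes the square commute, completing the argument. The only genuine obstacle is the bookkeeping in~(2); everything else is standard local analysis plus the discrepancy computation.
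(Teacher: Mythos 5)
Your proposal is correct, but it takes a more computational, coordinate-based route than the paper, and one step of your main argument has a gap that your own fallback repairs. The paper never writes local coordinates: smoothness of $(\Bl_{D_1 \times D_2} X_1 \times X_2)/G$ follows because the $G$-fixed locus is the smooth exceptional divisor; the residual $\bZ/2$-fixed locus is identified with the \emph{ramification locus} of the double cover $q$ (invoking purity of the branch locus) and described as a strict transform, rather than by computing the fixed loci of the two lifts $\iota_1\times\id$ and $\id\times\iota_2$ on the blowup as you do; and crepancy is proved by showing $f^*K_{(\Bl_{D_1\times D_2}X_1\times X_2)/G}$ and $g^*K_{X_1\times X_2}$ both equal $K_{\Bl_{D_1\times D_2}X_1\times X_2}+E$ and then cancelling $f^*$, which requires knowing $f^*$ is injective on $\Pic$ --- a fact the paper proves via $f_*\cO_X \simeq \cO_Y \oplus \cL^\vee$ and uniqueness of decompositions into indecomposables. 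That injectivity is exactly what your ``whence $K_{\tilde W}=\pi^*K_W$'' elides: for a finite double cover, the kernel of $\tilde p^*$ on divisor classes can a priori contain $2$-torsion (e.g.\ the K3 cover of an Enriques surface), so $\tilde p^*(K_{\tilde W}-\pi^*K_W)\sim 0$ does not by itself give $K_{\tilde W}\sim\pi^*K_W$. Your parenthetical alternative --- that transversally along the exceptional divisor $\pi$ is the minimal resolution of the $A_1$-singularity times a smooth factor, so the discrepancy coefficient vanishes --- is a complete and standard argument, so your proof stands; just promote it from an aside to the actual argument (or supply the $\Pic$-injectivity as the paper does, or kill the torsion by intersecting with a contracted ruling curve of the exceptional divisor). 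On the plus side, your chart identification in (3) of $(\Bl_{D_1\times D_2}X_1\times X_2)/(\bZ/2\times\bZ/2)$ with $\Bl_{D_1\times D_2}(X_1/(\bZ/2)\times X_2/(\bZ/2))$ makes explicit an identification the paper leaves implicit, and your fixed-locus bookkeeping (the two sections of $E$ being absorbed into the strict transforms) correctly recovers the paper's ramification-locus description.
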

        \begin{proof}
        The birationality and $\bZ/2$-equivariance of $\pi$ is clear from construction. 
        In particular, $\pi$ induces a map $\bar{\pi}$ between the $\bZ/2$-quotients of the source and target.
        
        Again by Cartan's lemma \cite{cartan}, the action of $G$ on $X_i$ can be locally linearized, so in particular the $D_i$ are smooth. It follows that $(\Bl_{D_1 \times D_2} X_1 \times X_2)/G$ is smooth, since it is a quotient of the smooth variety $\Bl_{D_1 \times D_2} X_1 \times X_2$ with fixed locus a smooth divisor, namely the exceptional divisor of the blowup.  
        Similarly, $X_1/(\bZ/2)$ and $X_2/(\bZ/2)$ are smooth, as is their blowup $\Bl_{D_1 \times D_2} (X_1/(\bZ/2) \times X_2/(\bZ/2))$ in the smooth center $D_1 \times D_2$. 

        Note that the morphism $q$ is a double cover, necessarily ramified along a divisor by purity of the branch locus. 
        Explicitly,  
        consider the copies of $D_1 \times X_2/(\bZ/2)$ and $X_1/(\bZ/2) \times D_2$ in $(X_1 \times X_2)/G$, given as the images of $D_1 \times X_2$ and $X_1 \times D_2$ under the quotient map. 
        Then the ramification locus of $q$ is given by the  
         strict transform in $(\Bl_{D_1 \times D_2} X_1 \times X_2)/G$ of their union, which is isomorphic to $D_1 \times X_2/(\bZ/2) \sqcup X_1/(\bZ/2) \times D_2$. 
         This proves the claim about the fixed locus of the $\bZ/2$-action on 
         $(\Bl_{D_1 \times D_2} X_1 \times X_2)/G$, since it coincides with the ramification locus of $q$.

        Finally, to check the crepancy of $\pi$, consider the diagram 
                        \begin{center}
            \begin{tikzcd}
                \on{Bl}_{D_1 \times D_2} X_1 \times X_2 \arrow[r, "g"] \arrow[d, "f"'] & X_1 \times X_2 \arrow[d] \\
                (\on{Bl}_{D_1 \times D_2} X_1 \times X_2)/G \arrow[r, "\pi"] & (X_1 \times X_2)/G
            \end{tikzcd}
            \end{center}
            Since $X_1 \times X_2 \to (X_1 \times X_2)/G$ is \'etale away from codimension 2, pullback preserves the canonical divisor. 
            Moreover, as $f$ is a double covering with nontrivial ramification along the exceptional divisor, $f^*$ is injective on $\Pic$.  
            Indeed, for any double covering $f: X \to Y$ branched along a divisor $D \subset Y$, there exists a line bundle $\cL$ on $Y$ such that $\cL^{\otimes 2} \simeq \cO_Y(D)$, and moreover $f_*\cO_X \simeq \cO_Y \oplus \cL^\vee$. If $f^*\cM \simeq \cO_X$ for any line bundle $\cM$, then one has 
            \[ \cM \oplus \cM \otimes \cL^\vee \simeq f_*f^*\cM \simeq \cO_Y \oplus \cL^\vee.\]
            Since splittings into indecomposables are unique in $\Coh(X)$, it follows that either $\cM \simeq \cO_Y$, as desired, or $\cM \simeq \cL \simeq \cL^\vee$, which is impossible as $\cL^{\otimes2} \simeq \cO_Y(D) \not\simeq\cO_Y$. 
            Thus, to conclude crepancy of $\pi$, it suffices to
check that the divisor $f^*K_{(\on{Bl}_{D_1 \times D_2} X_1 \times X_2)/G} - g^*K_{X_1 \times X_2}$ is trivial. But since $g$ is a blowup and $f$ is a double cover ramified along the exceptional divisor $E$, both $f^*K_{(\on{Bl}_{D_1 \times D_2} X_1 \times X_2)/G}$ and $g^*K_{X_1 \times X_2}$  equal $K_{\on{Bl}_{D_1 \times D_2} X_1 \times X_2} + E$.  
        \end{proof}
        
We inductively use Lemma~\ref{induction} to construct crepant resolutions of quotients of products with more factors.   
	\begin{proposition}
		\label{chvar}
		Let $X_1, \dots, X_n$ be smooth varieties equipped with $\bZ/2$-actions whose fixed loci are divisors $D_i \subset X_i$. 
		Then there exists a crepant resolution 
		\begin{equation*}
		 \pi \colon \cyclicquotient(X_1, \dots, X_n) \to (X_1 \times \cdots \times X_n)/G_n 
		\end{equation*} 
		where $\cyclicquotient(X_1, \dots, X_n)$ is equipped with a $\bZ/2$-action 
		such that:
		\begin{enumerate}
		\item 
		The fixed locus $\bD(X_1, \dots, X_n) \subset \cyclicquotient(X_1, \dots, X_n)$ of 
		the $\bZ/2$ action is a divisor, isomorphic to  
  \begin{equation*}
      (\bD(X_1, \dots, X_{n-1}) \times X_n/(\bZ/2)) \sqcup 
      (\cyclicquotient(X_1, \dots, X_{n-1})/(\bZ/2) \times D_n ) . 
  \end{equation*}
		\item The morphism $\pi$ is equivariant with respect to the $\bZ/2$-action on the source and the 
		induced action of $\bZ/2 \cong (\bZ/2)^n/G_n$ on the target. 
		\end{enumerate} 
		
	\end{proposition}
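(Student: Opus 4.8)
The plan is to proceed by induction on $n$, using Lemma~\ref{induction} as the inductive step with a suitable choice of the two factors. The base case $n=1$ is trivial: $G_1$ is the trivial group, so $(X_1)/G_1 = X_1$ is already smooth, and we set $\cyclicquotient(X_1) = X_1$ with its given $\bZ/2$-action, whose fixed locus is the divisor $D_1$ by hypothesis. For the inductive step, suppose the proposition holds for $n-1$ factors, so that we have a crepant resolution $\pi' \colon \cyclicquotient(X_1, \dots, X_{n-1}) \to (X_1 \times \cdots \times X_{n-1})/G_{n-1}$ together with a $\bZ/2$-action on the source whose fixed locus $\bD(X_1, \dots, X_{n-1})$ is a divisor, and such that $\pi'$ is equivariant for the $\bZ/2 \cong (\bZ/2)^{n-1}/G_{n-1}$-action on the target.

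First I would apply Lemma~\ref{induction} with the two smooth varieties
\[
Y_1 = \cyclicquotient(X_1, \dots, X_{n-1}) \quad \text{and} \quad Y_2 = X_n,
\]
each equipped with its $\bZ/2$-action; the fixed locus of the action on $Y_1$ is the divisor $\bD(X_1, \dots, X_{n-1})$ by the inductive hypothesis, and on $Y_2$ it is $D_n$, so the hypotheses of the lemma are satisfied. The lemma produces a crepant resolution $\pi'' \colon (\Bl_{\bD \times D_n} Y_1 \times Y_2)/G \to (Y_1 \times Y_2)/G$ that is $\bZ/2 \cong (\bZ/2 \times \bZ/2)/G$-equivariant, with fixed locus on the source the divisor $\bD(X_1,\dots,X_{n-1}) \times X_n/(\bZ/2) \sqcup Y_1/(\bZ/2) \times D_n$, which is exactly the asserted description in part~(1). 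I would then define $\cyclicquotient(X_1, \dots, X_n) = (\Bl_{\bD \times D_n} Y_1 \times Y_2)/G$ with this $\bZ/2$-action.

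The step requiring the most care is identifying the target $(Y_1 \times Y_2)/G$, together with its induced $\bZ/2$-action, with the desired $(X_1 \times \cdots \times X_n)/G_n$ so that the composite $\pi = \bar q \circ \pi''$ (or rather the composite with the resolution $\pi'$ propagated through the quotient) lands on the correct base. The key bookkeeping point is the group-theoretic identity: taking the diagonal $\bZ/2$ inside $\bZ/2 \times \bZ/2$ acting on $Y_1 \times X_n$, where the first $\bZ/2$ is itself the residual $(\bZ/2)^{n-1}/G_{n-1}$-action on $Y_1$, recovers precisely the quotient of $X_1 \times \cdots \times X_n$ by $G_n$. Concretely, I would verify that $(\cyclicquotient(X_1,\dots,X_{n-1}) \times X_n)/G$ resolves $((X_1 \times \cdots \times X_{n-1})/G_{n-1} \times X_n)/(\text{diagonal } \bZ/2)$, and that the latter equals $(X_1 \times \cdots \times X_n)/G_n$ via the short exact sequence relating $G_{n-1}$, the diagonal $\bZ/2$, and $G_n$. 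Crepancy of the composite $\pi$ follows from crepancy of $\pi'$ (inductive hypothesis) and crepancy of $\pi''$ (Lemma~\ref{induction}(1)), since a composition of crepant morphisms is crepant; and the equivariance statement in part~(2) follows by tracking the residual $\bZ/2$-actions through both the inductive resolution and the lemma. The main obstacle is thus not any single hard estimate but the careful matching of the inductively built $\bZ/2$-action on $\cyclicquotient(X_1,\dots,X_{n-1})$ with the product action used in Lemma~\ref{induction}, ensuring the two notions of "the $\bZ/2$-action" agree so that the inductive description of the fixed divisor in part~(1) is literally the one the lemma outputs.
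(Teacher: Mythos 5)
Your strategy coincides with the paper's: induct on $n$, apply Lemma~\ref{induction} to the pair $\cyclicquotient(X_1,\dots,X_{n-1})$ and $X_n$ (legitimate because the inductive hypothesis gives that the fixed locus $\bD(X_1,\dots,X_{n-1})$ is a divisor), and use the isomorphism $G_n/G_{n-1}\cong G\subset\bZ/2\times\bZ/2$ to identify $\bigl((X_1\times\cdots\times X_{n-1})/G_{n-1}\times X_n\bigr)/G$ with $(X_1\times\cdots\times X_n)/G_n$. The fixed-locus description in (1) and the equivariance in (2) do come out of Lemma~\ref{induction} exactly as you say.

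However, your crepancy argument has a genuine gap. The composite to be analyzed is
\begin{equation*}
\cyclicquotient(X_1,\dots,X_n) \xrightarrow{\;\pi''\;} (\cyclicquotient(X_1,\dots,X_{n-1})\times X_n)/G \xrightarrow{\;\rho\;} (X_1\times\cdots\times X_n)/G_n ,
\end{equation*}
where $\rho$ is not $\pi'$ but the map induced by $\pi'\times\id_{X_n}$ after passing to $G$-quotients (your formula $\pi=\bar q\circ\pi''$ is also not the right composite; your parenthetical correction is). So ``a composition of crepant morphisms is crepant'' only helps once you know $\rho$ is crepant, and that does not formally follow from crepancy of $\pi'$: passing to the quotient by a finite group can change canonical divisors by ramification terms, and both the source and the target of $\rho$ are singular, so even the assertion that the canonical divisor pulls back to the canonical divisor requires justification. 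This is precisely the point on which the paper spends a paragraph: in the commutative square formed by $\pi'\times\id_{X_n}$, $\rho$, and the two quotient maps, the quotient maps are \'etale in codimension $1$ (the diagonal $G$-action upstairs has fixed locus $\bD(X_1,\dots,X_{n-1})\times D_n$, which has codimension $2$, and similarly downstairs), so they preserve canonical divisors; combined with crepancy of $\pi'\times\id_{X_n}$, the comparison map $\rho^*\omega_{(X_1\times\cdots\times X_n)/G_n}\to\omega_{(\cyclicquotient(X_1,\dots,X_{n-1})\times X_n)/G}$ becomes an isomorphism after pullback along the upstairs quotient map, and one then needs a conservativity statement for that pullback to conclude that $\rho$ itself is crepant. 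Some version of this descent argument --- note this is also where the hypothesis that the fixed loci $D_i$ are divisors, hence the diagonal fixed locus has codimension $2$, gets used --- is needed to close your proof; as written, the key step is asserted rather than proved.
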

	\begin{proof}
For $n = 1$ we set $\cyclicquotient(X_1) = X_1$. 
For $n \geq 2$, by induction assume there is a crepant resolution 
\begin{equation*} 
\cyclicquotient(X_1, \dots, X_{n-1}) \to (X_1 \times \cdots \times X_{n-1})/G_{n-1}
\end{equation*}
with the desired properties. 
Taking the product with $X_n$ gives a crepant resolution 
\begin{equation*}
\cyclicquotient(X_1, \dots, X_{n-1}) \times X_n \to (X_1 \times \cdots \times X_{n-1})/G_{n-1} \times X_n
\end{equation*} 
which is equivariant with respect to the product action of $\bZ/2 \times \bZ/2$ on the source and target. 
Let $G \subset \bZ/2 \times \bZ/2$ be the diagonal copy of $\bZ/2$. 
Note that there is an isomorphism 
\begin{equation*}
\frac{(X_1 \times \cdots \times X_{n-1})/G_{n-1} \times  X_n}{G} \cong (X_1 \times \cdots \times X_n)/G_{n} 
\end{equation*} 
since the natural map $G_{n}/G_{n-1} \to (\bZ/2)^{n-1}/G_{n-1} \times \bZ/2\cong \bZ/2 \times \bZ/2$ is an isomorphism onto $G$. 
Thus passing to the quotient by $G$, the above crepant resolution induces a birational morphism 
\begin{equation*}
(\cyclicquotient(X_1, \dots, X_{n-1}) \times X_n)/G \to (X_1 \times \cdots \times X_n)/G_{n}, 
\end{equation*} 
which is equivariant with respect to the action of $\bZ/2 \cong (\bZ \times \bZ)/G$ on the source and $\bZ/2 \cong (\bZ/2)^n/G_n$ on the target. 
The canonical divisor pulls back to the canonical divisor under this map. Indeed, in the diagram 
\begin{center}
    \begin{tikzcd}
        \cyclicquotient(X_1, \dots, X_{n-1}) \times X_n \arrow[r, "f'"] \arrow[d, "g'"] & (X_1 \times \cdots \times X_{n-1})/G_{n-1} \times X_n \arrow[d, "g"] \\
        (\cyclicquotient(X_1, \dots, X_{n-1}) \times X_n)/G \arrow[r, "f"] & (X_1 \times \cdots \times X_n)/G_{n}
    \end{tikzcd}
\end{center}
the canonical map $f^*\omega_{(X_1 \times \cdots \times X_n)/G_{n}} \to \omega_{(\cyclicquotient(X_1, \dots, X_{n-1}) \times X_n)/G}$ pulls back under $g'$ to an isomorphism as the vertical maps $g$ and $g'$ are \'etale in codimension 1 and $f'$ is crepant, but pullback along $g'$ is conservative as $g'$ is faithfully flat.  
Therefore, we may take $\cyclicquotient(X_1,\dots,X_{n})$ to be the $\bZ/2$-equivariant crepant resolution 
\begin{equation*} 
\cyclicquotient(X_1,\dots,X_{n}) \to (\cyclicquotient(X_1,\dots,X_{n-1}) \times X_n)/G
\end{equation*} 
given by Lemma \ref{induction}.
	\end{proof}

We call the variety $\cyclicquotient(X_1, \dots,  X_n)$ constructed in Proposition~\ref{chvar} the \emph{iterated $\bZ/2$-quotient crepant resolution} of $(X_1 \times \cdots \times X_n)/G_n$.
        Such resolutions encompass a number of interesting classes of varieties, which have been studied especially in the Calabi--Yau case. 
        
        \begin{remark}
        As observed in \cite[\S2]{cynk-hulek}, when the $X_i$ are (strict) Calabi--Yau varieties, then so too is $\cyclicquotient(X_1, \dots, X_n)$. 
        \end{remark} 
        
        \begin{example}[Cynk--Hulek varieties] 
        \label{example-cynk-hulek}
        If $E_1, \dots, E_n$ are elliptic curves with their natural involutions, then 
        $\cyclicquotient(E_1,\dots,E_n)$ recovers the family of strict Calabi--Yau varieties constructed by Cynk and Hulek in \cite{cynk-hulek}. 
        \end{example} 

        \begin{example}
            As a variation on the same theme, one obtains non-Calabi--Yau varieties by taking $\cyclicquotient(C_1,C_2,\dots,C_n)$ for $C_i$ hyperelliptic curves with the natural involutions. 
        \end{example}
        
        \begin{example}[Borcea--Voisin threefolds] 
            \label{borcea-voisin}
            If $X$ is a K3 surface with an anti-symplectic involution and $E$ is an elliptic curve with its natural involution, then $\cyclicquotient(X,E)$ is a simply connected Calabi--Yau threefold which was introduced by Borcea and Voisin \cite{borcea, voisin}. 
        \end{example}

\subsection{Topology} 
	To end this section, we show that Cynk--Hulek varieties are simply connected, by adapting an argument of Spanier~\cite{spanier} for Kummer manifolds.
    \begin{proposition}
    \label{proposition-pi1}
        If $E_1, \dots, E_n$ are $n \geq 2$  
        elliptic curves, then $\cyclicquotient(E_1,\dots,E_n)$ is simply connected. 
    \end{proposition}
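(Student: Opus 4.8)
The plan is to induct on $n$, using the inductive presentation of $\cyclicquotient(E_1, \dots, E_n)$ from Proposition~\ref{chvar} together with the classical principle---going back to Spanier \cite{spanier}, and in general form to Armstrong---that the fundamental group of the orbit space of a properly discontinuous action on a simply connected space is the quotient of the group by the normal subgroup generated by those elements that act with a fixed point. Write $Y_n = \cyclicquotient(E_1, \dots, E_n)$, so that by Proposition~\ref{chvar} we have $Y_n = W_n/(\bZ/2)$ with $W_n = \Bl_{\bD(E_1, \dots, E_{n-1}) \times D_n}(Y_{n-1} \times E_n)$ and $D_n \subset E_n$ the $2$-torsion.

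First I would pass to the universal cover $\widetilde{W}_n \to W_n$. Since the blown-up center has codimension two, $\pi_1(W_n) \cong \pi_1(Y_{n-1}) \times \pi_1(E_n)$, and $\widetilde{W}_n$ is obtained by blowing up the preimage of this center inside the universal cover of $Y_{n-1} \times E_n$; as the blowup of a simply connected manifold along a smooth center of codimension at least two is again simply connected, $\widetilde{W}_n$ is simply connected. The $\bZ/2$-action on $W_n$ lifts to $\widetilde{W}_n$ and, together with the deck group, generates a group $H_n$ fitting in an extension $1 \to \pi_1(W_n) \to H_n \to \bZ/2 \to 1$ that acts properly discontinuously on $\widetilde{W}_n$ with $\widetilde{W}_n / H_n \cong Y_n$. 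By the principle above, $\pi_1(Y_n) \cong H_n/N_n$, where $N_n \trianglelefteq H_n$ is generated by the elements of $H_n$ acting with fixed points.

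The crux is to show these fixed-point elements already exhaust $H_n$. Let $h \in H_n$ be any order-two element lifting the generator of $\bZ/2$; on the universal cover of $Y_{n-1} \times E_n$ it acts by a lift of the involution on the $Y_{n-1}$-factor and by $z \mapsto -z + \mu$ on the universal cover $\bC$ of $E_n$, for some element $\mu$ of the period lattice of $E_n$. Its fixed locus downstairs is $\bD(E_1, \dots, E_{n-1}) \times \{\mu/2\}$, which is contained in the blown-up center; since the involution acts by $-1$ on the normal direction to its fixed divisor, and $z \mapsto -z + \mu$ acts by $-1$ on the normal coordinate $z - \mu/2$, the element $h$ acts by $-1 \oplus -1$ on the two normal directions and hence fixes the exceptional $\bP^1$ over each such point pointwise. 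Thus every such $h$ lies in $N_n$. Finally I would note that a product of two such involutions is a pure deck transformation, and letting $\mu$ vary these products sweep out the full lattice $\pi_1(E_n)$ (for the base case $n = 2$, where $Y_1 = E_1$ is not yet simply connected, the identical computation instead sweeps out $\pi_1(E_1) \times \pi_1(E_2)$); together with a single involution this forces $N_n = H_n$, so that $\pi_1(Y_n) = 1$. For $n \ge 3$ the induction hypothesis that $Y_{n-1}$ is simply connected simplifies the extension to $H_n \cong \pi_1(E_n) \rtimes \bZ/2$, while $n = 2$ reproduces Spanier's computation for Kummer manifolds.

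The hard part will be the local analysis on the blowup $\widetilde{W}_n$: one must verify that the fixed loci of the lifted involutions lie in the blown-up center and that each such involution acts by $-1 \oplus -1$ on the two-dimensional normal bundle there, so that it does not lose its fixed points upon blowing up but rather fixes an exceptional $\bP^1$, and hence contributes to $N_n$. By contrast, checking the hypotheses of the quotient principle---proper discontinuity of $H_n$, which follows from that of the deck action extended by the finite group $\bZ/2$---is routine.
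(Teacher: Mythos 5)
Your proof is correct, but it takes a genuinely different route from the paper's. The paper also inducts on $n$ and uses the same presentation from Proposition~\ref{chvar}, but where you work with the smooth double cover $W_n \to Y_n$ and apply Armstrong's theorem to the lifted action of $H_n$ on the universal cover $\widetilde{W}_n$ of the blowup, the paper instead views $Y_n = \tilde{\bX}_n$ as a resolution of the singular quotient $(\tilde{\bX}_{n-1}\times E_n)/(\bZ/2)$, invokes Koll\'ar's theorem \cite{kollar} to identify $\pi_1(\tilde{\bX}_n)$ with $\pi_1$ of that quotient, and then computes the latter via a groupoid version of van Kampen's theorem (Lemma~\ref{lemma-van-kampen}) applied to the cover by the image of the free locus and of a tubular neighborhood of the fixed locus, with explicit Spanier-style antipodal paths \cite{spanier} witnessing the required surjectivity up to conjugation. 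The geometric heart is the same in both arguments: each lift $h_\mu$ of the involution is an order-two map whose fixed locus $\bD_{n-1}\times\{\mu/2\}$ lies in the blown-up center with normal action $(-1)\oplus(-1)$, and products of two lifts sweep out the period lattice. What your route buys is the elimination of both Koll\'ar's theorem and the van Kampen bookkeeping, including the paper's auxiliary induction showing that $\pi_{\leq 1}(\bD_n)$ is discrete; your argument needs only that $\bD_{n-1}$ is nonempty, which is immediate from Proposition~\ref{chvar}. The trade-off is that you must verify Armstrong's hypotheses: that $\widetilde{W}_n$ is simply connected (blowups along smooth complex centers of codimension at least two, here a locally finite disjoint union, do not change $\pi_1$, and blowups commute with \'etale base change, so $\widetilde{W}_n$ is indeed the blowup of $Y_{n-1}\times\bC$ along the preimage of the center), and that the $H_n$-action is properly discontinuous with finite stabilizers, which as you note follows since $H_n$ contains the deck group with index two. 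Both points hold, so your proof goes through.
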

        
    \begin{proof}
        Let $\tilde{\bX}_n = \cyclicquotient(E_1,\dots,E_n)$ and let $\bD_n = \bD(E_1, \dots, E_n)$ be the fixed locus of the $\bZ/2$-action. 
          We first prove that $\bD_n$ has discrete fundamental groupoid $\pi_{\leq 1}(\bD_n)$.  By construction, we have 
    \begin{equation*}
             \bD_n = (\bD_{n-1} \times \bP^1) \sqcup 
      (\tilde{\bX}_{n-1}/(\bZ/2) \times E_n[2]),
      \end{equation*}   
      where $E_n[2] \subset E_n$ is the $2$-torsion in $E_n$. 
      It follows from Proposition~\ref{chvar} that the upon passing to $\bZ/2$-quotients, the resolution $\pi \colon \tilde{\bX}_{n-1} \to (E_1 \times \cdots \times E_{n-1})/G_n$ induces a morphism 
      \begin{equation*}
          \tilde{\bX}_{n-1}/(\bZ/2) \to (E/(\bZ/2))^{n-1} \cong (\bP^1)^{n-1}
      \end{equation*}
      which is a birational map of smooth projective varieties, and hence 
    \begin{equation*}
    \pi_1(\tilde{\bX}_{n-1}/(\bZ/2)) = \pi_1((\bP^1)^{n-1}) = 0.  
      \end{equation*} 
      The discreteness of $\pi_{\leq 1}(\bD_n)$ then follows by induction from the above decomposition of $\bD_n$. 
      
        Now we prove that $\tilde{\bX}_{n}$ is simply connected by induction on $n$. 
        By construction, $\tilde{\bX}_n$ is a resolution of $(\tilde{\bX}_{n-1} \times E_n)/(\bZ/2)$ where $\bZ/2$ acts diagonally. 
        Hence by \cite[Theorem 7.8]{kollar} we have 
        \begin{equation*}
 \pi_1(\tilde{\bX}_n) \cong \pi_1((\tilde{\bX}_{n-1} \times E_n)/(\bZ/2)), 
        \end{equation*}
        and so we may focus on proving triviality of the latter. 
        Consider the quotient map \[f : \tilde{\bX}_{n-1} \times E_n \to (\tilde{\bX}_{n-1}\times E_n)/(\bZ/2)\] which is an \'etale double cover away from the fixed locus $\bD_{n-1} \times E_n[2]$. Let 
        \[U = (\tilde{\bX}_{n-1} \times E_n) \setminus (\bD_{n-1} \times E_n[2])\]
        and let $V \subset \tilde{\bX}_{n-1} \times E_n$ be a $\bZ/2$-equivariant tubular neighborhood of $\bD_{n-1} \times E_n[2]$. 
        To compute the fundamental group of $(\tilde{\bX}_{n-1}\times E_n)/(\bZ/2)$, we will apply van Kampen's theorem to the cover by $f(U)$ and $f(V)$. 
        Since $f(V)$ is not connected, we will employ the groupoid version of this theorem; more specifically, we will use the criterion for simple connectivity from Lemma~\ref{lemma-van-kampen} below. For this purpose, we assemble some properties of the fundamental group(oids) of $f(U \cap V)$, $f(U)$, and $f(V)$; note that $U \cap V$ deformation retracts onto $\partial V$, so we may focus on $\partial V$ instead.

        Since the fixed locus $\bD_{n-1} \times E_n[2]$ is of codimension 2, there is a fiber bundle sequence 
        \[D^4 \to V \to \bD_{n-1} \times E_n[2], \]
        where $D^4$ is the unit 4-disk, while the boundary $\partial V$ similarly fits into a fiber bundle sequence
        \[S^3 \to \partial V \xrightarrow{\, g \,} \bD_{n-1} \times E_n[2].\]
        Thus, since we chose $V$ to be $\bZ/2$-equivariant, $f(V)$ fits into a fiber bundle sequence 
        \[c\mathbf{RP}^3 \to f(V) \to \bD_{n-1} \times E_n[2],\]
        where $c\mathbf{RP}^3$ is the contractible cone over $\mathbf{RP}^3$, while the boundary $\partial(f(V))$ fits into a fiber bundle sequence 
        \[\mathbf{RP}^3 \to \partial(f(V)) \to \bD_{n-1} \times E_n[2].\]
         By considering the associated long exact sequences on homotopy groups, we find from these fibration sequences that $\pi_{\leq 1}(f(V))$ is discrete (thanks to the discreteness of $\pi_{\leq 1}(\bD_{n-1})$ proved above), and the map $\pi_0(\partial(f(V))) \to \pi_0(f(V))$ is bijective. 

        Next we claim that the morphism 
         \[\pi_{\leq 1}(\partial(f(V))) \to \pi_{\leq 1}(f(U))\]
         is surjective on morphisms up to conjugation of paths. 
        Granting this, we conclude the simple connectivity of $\tilde{\bX}_{n}$ from Lemma~\ref{lemma-van-kampen} below with $U_1 = f(V)$ and $U_2 = f(U)$. 
        
        Since $U \to f(U)$ is an \'etale double cover, the complement $\tilde{\bX}_{n-1} \times E_n \setminus U$ is of codimension $2$, and $\pi_1(\tilde{\bX}_{n-1}) = 0$ by induction, there are isomorphisms 
        \[\pi_1(f(U)) \simeq \pi_1(U) \rtimes \bZ/2 \simeq \pi_1(\tilde{\bX}_{n-1} \times E_n) \rtimes \bZ/2 \simeq \bZ^2 \rtimes \bZ/2,\]
        where the semidirect product is induced by the natural $\bZ/2$-action on $\pi_1(\tilde{\bX}_{n-1} \times E_n) \simeq \bZ^2$ by negation.

        On the other hand, note that the 2-torsion $E_n[2]$ consists of 4 points which we will label as $p_{ij}$ for $i,j \in \{0,1\}$. Now fix any $x \in \bD_{n-1}$, and take $q_{ij} \in \partial V \subset \tilde{\bX}_{n-1} \times E$ to be any point in the fiber \[S^3 \cong g^{-1}(x,p_{ij}).\] 
        Let $a$ be a path from $-q_{00}$ to $q_{00}$ taken on the surface of the sphere $g^{-1}(x,p_{00})$, i.e. an antipodal path. Here, the negation denotes the action of $\bZ/2$ on the point $q_{00}$. Likewise, take $c_{ij}$ to be a path from $q_{ij}$ to $-q_{ij}$ on the sphere $g^{-1}(x,p_{ij})$. Take $\beta_{ij}$ to be a path in $U$ from $q_{00}$ to $q_{ij}$. \par 
        Now observe that the generators of $\pi_1(\tilde{\bX}_{n-1}\times E_n) \cong \bZ^2$ can be written as $a\beta_{10}c_{10}(-\beta_{10})^{-1}$ and $a\beta_{01}c_{01}(-\beta_{01})^{-1}$. After taking the quotient map $f$, note that $f(a) \in \pi_{\leq1}(\partial(f(V)))$ maps to the generator of $\bZ/2$ in $\pi_1(f(U))$, while we may write 
        \[f(a\beta_{10}c_{10}(-\beta_{10})^{-1}) = f(a)f(\beta_{10})f(c_{10})f(\beta_{10})^{-1}\]
        where $f(c_{10})$ is a loop also coming from $\partial(f(V))$. Similarly, one may write $f(a\beta_{01}c_{01}(-\beta_{01})^{-1})$ as coming from the image of $\pi_{\leq1}(\partial(f(V))) \to \pi_{\leq 1}(f(U))$ up to conjugation. This exhibits the necessary surjectivity. 
    \end{proof}

We invoked the following result above. 
    \begin{lemma}
    \label{lemma-van-kampen}
            Let $Y$ be a topological space with an open cover $Y = U_1 \cup U_2$ such that $\pi_{\leq1}(U_1)$ is discrete, $\pi_0(U_1 \cap U_2) \to \pi_0(U_1)$ is bijective, $U_2$ is connected, and the normal subgroupoid generated by the image of the map of groupoids \[\pi_{\leq1}(U_1 \cap U_2) \to \pi_{\leq1}(U_2)\]
            is $\pi_{\leq1}(U_2)$. 
            Then $Y$ is simply connected.
        \end{lemma}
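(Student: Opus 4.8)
The plan is to apply the groupoid form of the Seifert--van Kampen theorem to the open cover $Y = U_1 \cup U_2$ and then extract the ordinary fundamental group of $Y$ at a basepoint $* \in U_1 \cap U_2$. (If $U_1 \cap U_2 = \emptyset$, then surjectivity of $\pi_0(U_1 \cap U_2) \to \pi_0(U_1)$ forces $U_1 = \emptyset$, so $Y = U_2$, and the normal subgroupoid hypothesis forces $\pi_{\leq 1}(U_2)$ to be trivial; thus I may assume $U_1 \cap U_2 \neq \emptyset$ from now on.) As a first step I would establish connectivity: since $\pi_0(U_1 \cap U_2) \to \pi_0(U_1)$ is surjective, every path component of $U_1$ meets $U_1 \cap U_2 \subseteq U_2$, and as $U_2$ is connected this joins every component of $U_1$ to $U_2$ inside $Y$, so $Y$ is connected.

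The heart of the argument is to show that $\pi_1(U_2, *) \to \pi_1(Y, *)$ is surjective with kernel the normal subgroup generated by the images of the fundamental groups of the components of $U_1 \cap U_2$. For surjectivity I would take a loop in $Y$ based at $*$, subdivide it into arcs lying alternately in $U_1$ and $U_2$, and consider each maximal excursion into a component $W$ of $U_1$, which enters and leaves at points $p, q \in U_1 \cap U_2$. Here the two hypotheses on $U_1$ combine exactly as needed: discreteness of $\pi_{\leq 1}(U_1)$ means $W$ is simply connected, so the excursion is homotopic rel endpoints, within $U_1$, to any arc from $p$ to $q$ in $W$; and injectivity of $\pi_0(U_1 \cap U_2) \to \pi_0(U_1)$ forces $p$ and $q$ into the same component of $U_1 \cap U_2$, so such an arc may be chosen inside $U_1 \cap U_2 \subseteq U_2$. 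Replacing every excursion this way homotopes the loop into $U_2$. The same discreteness shows that the only relations introduced by gluing in $U_1$ are that loops coming from $U_1 \cap U_2$ (which bound in the simply connected components of $U_1$) become trivial, which pins down the kernel. This is exactly what the groupoid van Kampen pushout
\[
\pi_{\leq 1}(Y) \cong \pi_{\leq 1}(U_1) \sqcup_{\pi_{\leq 1}(U_1 \cap U_2)} \pi_{\leq 1}(U_2)
\]
yields once the (essentially) discrete groupoid $\pi_{\leq 1}(U_1)$ is substituted in, collapsing the pushout to a quotient of $\pi_{\leq 1}(U_2)$.

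Finally I would invoke the last hypothesis. Passing from the pushout to the vertex group at $*$ and using connectivity of $U_2$ to transport each $\pi_1(U_1 \cap U_2, p)$ to $*$, the assumption that the normal subgroupoid generated by the image of $\pi_{\leq 1}(U_1 \cap U_2) \to \pi_{\leq 1}(U_2)$ is all of $\pi_{\leq 1}(U_2)$ says precisely that the kernel computed above is the whole of $\pi_1(U_2, *)$. Hence $\pi_1(Y, *) = 0$, and together with connectivity this shows $Y$ is simply connected.

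The main obstacle is the bookkeeping in passing between the full fundamental groupoids in the hypothesis and based fundamental groups: one must verify that the normal subgroupoid generated by the image of $\pi_{\leq 1}(U_1 \cap U_2)$ --- which a priori contains paths between distinct points of $U_1 \cap U_2$ --- restricts at $*$ to the normal subgroup generated by the transported images of the $\pi_1$ of the components of $U_1 \cap U_2$, with those connecting paths supplying exactly the conjugations required. The geometric replacement argument above is what makes this matching precise, and its validity rests squarely on using \emph{both} the injectivity and surjectivity of $\pi_0(U_1 \cap U_2) \to \pi_0(U_1)$ in tandem with the simple connectivity of the components of $U_1$.
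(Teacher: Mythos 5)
Your proof is correct, but it reaches the conclusion by a route that differs in execution from the paper's. The paper stays entirely formal: it fixes a set $A$ of representatives of $\pi_0(U_1 \cap U_2)$, invokes the groupoid van Kampen theorem to present $\pi_{\leq 1}(Y,A)$ as a strict (indeed homotopy) pushout, observes that the hypotheses force $\pi_{\leq 1}(U_1,A)$ to be the discrete groupoid on its vertices, and then shows the pushout is the trivial group by a direct universal-property (cocone) check --- no loop is ever subdivided, and no based fundamental group of $Y$ is ever presented. You instead prove the based surjection $\pi_1(U_2,*) \to \pi_1(Y,*)$ geometrically, by Lebesgue subdivision and replacement of excursions into components of $U_1$, using discreteness of $\pi_{\leq 1}(U_1)$ for the rel-endpoints homotopy and injectivity of $\pi_0(U_1 \cap U_2) \to \pi_0(U_1)$ to re-route each excursion through $U_1 \cap U_2$; you then kill $\pi_1(Y,*)$ using the normal-generation hypothesis. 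A pleasant feature of your route, worth making explicit, is that the exact kernel identification (the ``hard'' direction of van Kampen, which you assert and then outsource to the groupoid pushout) is never actually needed: surjectivity, the evident vanishing in $\pi_1(Y,*)$ of the transported images of $\pi_1(C,p)$ for components $C$ of $U_1 \cap U_2$ (such loops bound in the simply connected components of $U_1$), and the hypothesis that these images normally generate $\pi_1(U_2,*)$ already force $\pi_1(Y,*) = 0$; so your argument can be made to rest only on the elementary half of van Kampen. What the paper's formal approach buys is that the basepoint and conjugation bookkeeping --- exactly the issue you flag as the main obstacle --- is absorbed once and for all into the groupoid formalism; what yours buys is elementarity, plus treatment of edge cases (connectivity of $Y$, empty intersection) that the paper leaves implicit.
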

        \begin{proof}
            This is an application of the groupoid version of van Kampen's theorem, one version of which can be found in \cite[Proposition 6.7.2]{groupoids}. \par 
            Recall that for any space $X$ and any collection of basepoints $A$ meeting every path-component of $X$, the fundamental groupoid $\pi_{\leq1}(X)$ may be presented  up to equivalence by a groupoid with vertices indexed by $A$: we denote this groupoid $\pi_{\leq1}(X,A)$. \par 
            Now let $A \subset U_1 \cap U_2$ be a set of representatives for $\pi_0(U_1 \cap U_2).$ By the groupoid van Kampen's theorem, it follows that 
            \begin{center}
                \begin{tikzcd}
                    \pi_{\leq1}(U_1 \cap U_2, A) \arrow[r] \arrow[d] & \pi_{\leq1}(U_1, A) \arrow[d] \\ 
                    \pi_{\leq1}(U_2, A) \arrow[r] & \pi_{\leq1}(Y,A)
                \end{tikzcd}
            \end{center}
            is a strict pushout of groupoids. Observe that this is a homotopy pushout in the canonical model structure on groupoids, since the top map is injective on vertices and hence cofibrant. \par 
            Now since $U_2$ is connected, $\pi_{\leq1}(U_2,A) \simeq \pi_1(U_2,y_0)$ as groupoids for any fixed $y_0 \in U_2$. Observe that the strict pushout of the resulting diagram 
            \begin{center}
                \begin{tikzcd}
                    \pi_{\leq1}(U_1 \cap U_2, A) \arrow[r] \arrow[d] & \pi_{\leq1}(U_1, A) \\ 
                    \pi_{1}(U_2, y_0) 
                \end{tikzcd}
            \end{center}
            is still a homotopy pushout as the top map is still cofibrant; as such, the pushout will be equivalent as a groupoid to $\pi_{\leq1}(Y,A).$ It therefore suffices to show that the pushout of this diagram is the trivial group, viewed as a groupoid. We verify this by checking the universal property. \par 
            Let $\cC$ be a cocone for this diagram, i.e. suppose that we have a commutative diagram 
            \begin{center}
                \begin{tikzcd}
                    \pi_{\leq1}(U_1 \cap U_2, A) \arrow[r] \arrow[d] & \pi_{\leq1}(U_1, A) \arrow[d] \\ 
                    \pi_{1}(U_2, y_0) \arrow[r] & \cC
                \end{tikzcd}
            \end{center}
            Since $A$ was chosen as a set of representatives for $\pi_0(U_1 \cap U_2) \cong \pi_0(U_1),$ it follows that $\pi_{\leq1}(U_1,A)$ is, as a groupoid, given by the collection of its vertices with only identity morphisms. Now the image of any vertex under the map $\pi_{\leq1}(U_1, A) \to \cC$ necessarily must be the image of the unique vertex of $\pi_1(U_2,y_0)$ by the commutativity of the above diagram; hence without loss of generality we may assume $\cC$ has a single vertex. Moreover, any morphism in $\pi_{\leq1}(U_1 \cap U_2, A)$ is sent to the identity morphism on the unique vertex of $\cC$, since the map $\pi_{\leq1}(U_1 \cap U_2, A) \to \cC$ factors through $\pi_{\leq1}(U_1, A)$ which has only identity morphisms. However, since the normal subgroupoid generated by $\pi_{\leq1}(U_1 \cap U_2,A)$ in $\pi_1(U_2,y_0)$ is all of $\pi_1(U_2,y_0)$ (i.e., $\pi_1(U_2,y_0)$ is generated by the image of $\pi_{\leq1}(U_1 \cap U_2,A)$ up to conjugation by loops in $\pi_1(U_2,y_0)$), it follows that any loop in $\pi_1(U_2,y_0)$ must be sent to the identity in $\cC$. It follows that the colimit of the diagram is the trivial group.
        \end{proof}
	

\section{The Bondal--Orlov conjecture}	
\label{section-bondal-orlov} 

The goal of this section is to prove Theorem~\ref{theorem-derived-equivalence}, which can be regarded as a 
stacky instance of the Bondal--Orlov conjecture on derived equivalence of crepant resolutions of Gorenstein singularities. 
We begin with some preliminaries on enhanced categories and group actions. 
	
\subsection{Enhanced categories} 
 \label{section-enhanced-categories}
	It will be convenient to employ stable $\infty$-categories as enhancements of triangulated categories; useful references for the general theory are \cite{HTT, HA, SAG}. 
	For a scheme $X$, we denote by $\Dperf(X)$ and $\Db(X)$ the stable $\infty$-category incarnations of the category of perfect complexes and the bounded derived category of coherent sheaves; their homotopy categories recover the classical triangulated categories of such complexes. 

Fix a field $k$. 
There is also a theory of stable $\infty$-categories relative to $k$ (and more generally relative to any base scheme). 
We follow the conventions set out in \cite[Section 2]{NCHPD}. 
In particular, a \emph{$k$-linear category} is a small idempotent-complete stable $\infty$-category $\cC$ which is equipped with a $\Dperf(k)$-module structure. 
The collection of all $k$-linear categories is organized into an $\infty$-category 	which we denote by $\Cat_k$. 

	\begin{example}
			Let $X$ be a quasi-compact quasi-separated $k$-scheme. 
			Then $\Dperf(X)$ and $\Db(X)$ are $k$-linear categories, with the action of $V \in \Dperf(k)$ given by $\pi^*V \otimes -$ where $\pi \colon X \to \Spec(k)$ is 
			the structure morphism. Note that when $X$ is smooth, $\Dperf(X) = \Db(X)$. 
	\end{example}
	
	The category $\Cat_k$ admits a closed symmetric monoidal structure with unit $\Dperf(k)$ and tensor product which we denote by $\cC \otimes \cD$ 
	for $\cC, \cD \in \Cat_k$ (cf. \cite[Theorem 4.5.2.1]{HA}). 
	Moreover, $\Cat_k$ admits all limits and colimits (cf. \cite[\S2.1]{mathew-descent}). 
	
	\begin{lemma}
	\label{lemma-tensor-limit}
    If $\cC \in \Cat_k$ is smooth and proper, then the
	tensor product $- \otimes \cC \colon \Cat_k \to \Cat_k$ commutes with all limits. In particular, if $X$ is a smooth proper $k$-variety, then $- \otimes \Dperf(X)$ commutes with all limits. 
	\end{lemma}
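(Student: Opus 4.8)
The plan is to deduce the result from the fact that a smooth and proper $k$-linear category is a \emph{dualizable} object of the symmetric monoidal $\infty$-category $\Cat_k$, combined with the formal observation that tensoring with a dualizable object is a right adjoint and hence preserves limits.

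First I would invoke the characterization, in the small idempotent-complete setting of $\Cat_k$, of smooth and proper categories as the dualizable objects (see \cite{NCHPD} and the references therein); the dual of such a $\cC$ is its opposite category $\cC^{\mathrm{op}}$, which I write as $\cC^{\vee}$. This duality is witnessed by evaluation and coevaluation maps $\mathrm{ev} \colon \cC^{\vee} \otimes \cC \to \Dperf(k)$ and $\mathrm{coev} \colon \Dperf(k) \to \cC \otimes \cC^{\vee}$ satisfying the triangle identities, where $\Dperf(k)$ is the monoidal unit.

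Next I would extract the relevant adjunction from this data. A standard manipulation with $\mathrm{ev}$ and $\mathrm{coev}$ exhibits $-\otimes \cC$ as left adjoint to $-\otimes \cC^{\vee}$. Since $\cC^{\vee}$ is again dualizable, with dual $\cC$, the same manipulation applied to $\cC^{\vee}$ exhibits $-\otimes \cC^{\vee}$ as left adjoint to $-\otimes \cC$; equivalently, $-\otimes \cC$ is itself a right adjoint. As right adjoints preserve all limits, the functor $-\otimes \cC$ commutes with all limits, proving the first assertion. For the final sentence, I would simply recall that $\Dperf(X) = \Db(X)$ is smooth and proper as a $k$-linear category whenever $X$ is a smooth proper $k$-variety, so that the general statement applies.

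The only real content lies in citing the correct form of the equivalence ``smooth and proper $\Leftrightarrow$ dualizable'' for objects of $\Cat_k$ and in keeping the variance of the two adjunctions straight; granting the duality, limit-preservation is purely formal.
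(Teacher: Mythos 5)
Your proposal is correct and follows essentially the same route as the paper: both invoke the equivalence between smooth-and-proper and dualizable objects of $\Cat_k$ (citing \cite{NCHPD}), observe that $-\otimes\cC$ is then both left and right adjoint to $-\otimes\cC^\vee$, and conclude limit-preservation from being a right adjoint, with the last sentence handled by the smoothness and properness of $\Dperf(X)$. Your extra detail of identifying $\cC^\vee$ with $\cC^{\mathrm{op}}$ and unwinding the two adjunctions via evaluation/coevaluation is a fine elaboration of what the paper states more tersely.
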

	
	\begin{proof}  
    We refer to \cite[\S4]{NCHPD} for background on smooth proper categories. 
    In particular, by \cite[Lemma 4.8]{NCHPD}, $\cC$ is smooth and proper if and only if $\cC$ is dualizable as an object of the symmetric monoidal $\infty$-category $\Cat_k$. 
    In this case, the functor $- \otimes \cC$ is simultaneously left and right adjoint to the tensor product with its dual $- \otimes \cC^\vee$; in particular, by virtue of being a right adjoint, it commutes with all limits. 
    Finally, we note that if $X$ is a smooth proper $k$-variety, then $\Dperf(X)$ is indeed smooth and proper, by \cite[Lemma 4.9]{NCHPD}. 
	\end{proof} 
	
	The tensor product in $\Cat_k$ is compatible with geometric fiber products.  
	
	\begin{theorem}[{\cite[Theorem 1.2]{bzfn}}]
		\label{tensor}
		Let $X$ and $Y$ be quasi-compact quasi-separated $k$-schemes. 
		Then external tensor product induces an equivalence
\begin{equation*}
\boxtimes : \Dperf(X) \otimes \Dperf(Y) \xrightarrow{\, \sim \,} \Dperf(X \times Y). 
\end{equation*}
	\end{theorem}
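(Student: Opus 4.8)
The plan is to deduce the statement from the corresponding assertion for the large presentable categories of quasi-coherent complexes, using that perfect complexes are precisely the compact objects therein. For a quasi-compact quasi-separated $k$-scheme $X$, the category $\Dqc(X)$ is compactly generated with compact objects exactly the perfect complexes, so that $\Dqc(X) \simeq \mathrm{Ind}(\Dperf(X))$; this combines the compact generation of Bondal--Van den Bergh with the identification of compact and perfect objects due to Thomason--Trobaugh. Since $\mathrm{Ind}$ is a symmetric monoidal equivalence from $\Cat_k$ onto the $\infty$-category $\mathrm{Pr}^{\mathrm{L}}_k$ of compactly generated presentable $k$-linear categories, with inverse given by passage to compact objects, it suffices to produce an equivalence $\Dqc(X) \otimes \Dqc(Y) \simeq \Dqc(X \times Y)$ in $\mathrm{Pr}^{\mathrm{L}}_k$ and then restrict to compact objects.

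First I would treat the affine case. For $X = \Spec(A)$ and $Y = \Spec(B)$ one has $\Dqc(X) = \mathrm{Mod}_A$ and $\Dqc(Y) = \mathrm{Mod}_B$, and the relative tensor product of module categories satisfies $\mathrm{Mod}_A \otimes_{\mathrm{Mod}_k} \mathrm{Mod}_B \simeq \mathrm{Mod}_{A \otimes_k B} = \Dqc(X \times Y)$, a standard computation in the theory of modules over $E_\infty$-rings.

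To globalize, I would use Zariski descent in each variable. Both $\Dqc(-)$ and $\Dqc(- \times Y)$ are recovered as limits of the \v{C}ech diagrams associated to a finite affine cover of $X$, and similarly for a cover of $Y$, so that applying descent in each variable reduces the general statement to the affine case treated above. The content of the argument is therefore to commute the functor $- \otimes \Dqc(Y)$, and symmetrically $\Dqc(X) \otimes -$, with these descent limits. This is where I expect the main difficulty to lie: tensor products in $\mathrm{Pr}^{\mathrm{L}}_k$ preserve colimits but not limits in general. The key input is that $\Dqc(Y)$ is dualizable --- indeed self-dual --- in $\mathrm{Pr}^{\mathrm{L}}_k$, which holds precisely because $Y$ is a perfect scheme; dualizability makes $- \otimes \Dqc(Y)$ a right adjoint (to tensoring with the dual) and hence limit-preserving. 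Establishing this dualizability, together with the compact generation statement for general quasi-compact quasi-separated schemes, is the technical heart of the proof.

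Finally, having obtained $\Dqc(X) \otimes \Dqc(Y) \simeq \Dqc(X \times Y)$, I would pass to compact objects. Since $\mathrm{Ind}$ is symmetric monoidal and the tensor product of compactly generated categories is again compactly generated, with compact objects generated by external tensors of compacts, taking compact objects returns the desired equivalence $\Dperf(X) \otimes \Dperf(Y) \simeq \Dperf(X \times Y)$, realized by the external tensor product functor $\boxtimes$ as the comparison map.
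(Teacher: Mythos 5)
The paper does not prove Theorem~\ref{tensor} at all; it is quoted directly from \cite[Theorem 1.2]{bzfn}, so there is no internal argument to compare against, and your sketch is essentially a reconstruction of the proof in that reference: the dictionary $\Dqc(X) = \mathrm{Ind}(\Dperf(X))$ with perfect $=$ compact (Thomason--Trobaugh, Bondal--Van den Bergh), the affine K\"unneth equivalence for module categories, descent plus dualizability of $\Dqc(Y)$ to globalize, and passage back to compact objects. The one compressed step is the claim that \v{C}ech descent ``reduces the general statement to the affine case'': for a quasi-separated but not semiseparated $X$, the pairwise intersections of affines in a cover are only quasi-compact quasi-separated rather than affine, so the reduction requires an induction on the number of affine opens needed to cover $X$ (via Mayer--Vietoris squares), exactly as in the cited reference's proof that quasi-compact quasi-separated schemes are perfect; this is a routine fix and does not affect the correctness of your approach.
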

	We note that the equivalence in the theorem above is natural in $X$ and $Y$, i.e. the external tensor product yields a natural equivalence of functors $\Sch^{\mathrm{qcqs}}_k \times \Sch^{\mathrm{qcqs}}_k \to \Cat_k$, where $\Sch^{\mathrm{qcqs}}_k$ is the category of quasi-compact quasi-separated $k$-schemes. This follows from the fact that the external tensor product is induced by applying the universal property of the tensor product in $\Cat_k$ to the functor 
    \[\Dperf(X) \times D_\perf(Y) \to D_\perf(X \times Y),\]
    given by pullback in each factor, which is natural in $X$ and $Y$. 
\subsection{Group actions} 	
\label{section-group-actions}
	Fix a finite discrete group $G$ and a field $k$. 
	Following the exposition in \cite{HH}, we discuss $G$-actions on a category $\cC \in \Cat_k$. 
	
	Let $BG$ denote the group $G$ viewed as a $1$-category with a single object $*$ whose endomorphisms are given by $\Hom_{BG}(*,*) = G$. By the nerve construction, we may view $BG$ as an $\infty$-category. 
	
	\begin{definition}
		A \emph{$G$-action on a category} $\cC \in \Cat_k$ is a functor $\rho: BG \to \Cat_k$ sending the unique 0-simplex $*$ of $BG$ to $\cC$. Viewing $\rho$ as a diagram of categories, we define the \textit{$G$-invariants} as $\cC^G := \lim(\rho)$. 
	\end{definition}

The category $\cC^G$ is well-defined, since as noted above $\Cat_k$ admits all limits. By taking the colimit along $\rho$, one may also define the $G$-coinvariants $\cC_G$, but we will not need this construction. 
	
	\begin{example}
	\label{example-quotient-stack}
		A $G$-action on a $k$-scheme $X$ corresponds to a functor $BG \to \Sch_k$ sending $*$ to $X$. Composing with the functor $\Dperf(-) \colon \Sch_k \to \Cat_k$ gives a $G$-action $\rho \colon BG \to \Cat_k$ on $\Dperf(X)$. It follows by descent that there is an equivalence $\Dperf(X)^G \simeq \Dperf([X/G])$, where $[X/G]$ is the quotient stack. 	
	\end{example}
	
	By using the following lemma, invariants can be computed successively along normal subgroups and their quotients. 

	\begin{lemma}
		\label{trans}
		Let 
		\[1 \to H \to G \to G/H \to 1\]
		be a short exact sequence of groups, and suppose that $\cC \in \Cat_k$ (or in any other complete $\infty$-category) admits a $G$-action $\rho : BG \to \Cat_k$. Then under the natural $H$-action $\rho|_{BH}$, the category of invariants $\cC^{H}$ admits a natural $(G/H)$-action such that 
		\[\cC^G \simeq (\cC^H)^{G/H}.\] 
	\end{lemma}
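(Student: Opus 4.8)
The plan is to realize the two-step formation of invariants as a composition of right Kan extensions. Applying the nerve/classifying-space functor to the short exact sequence $1 \to H \to G \to G/H \to 1$ yields a sequence of $\infty$-categories (in fact groupoids)
\[
BH \xrightarrow{\ \iota\ } BG \xrightarrow{\ q\ } B(G/H),
\]
where $\iota$ comes from the inclusion $H \hookrightarrow G$ and $q$ from the projection $G \twoheadrightarrow G/H$. Write $\pi \colon BG \to *$ and $p \colon B(G/H) \to *$ for the projections to the terminal $\infty$-category, so that $\pi = p \circ q$, and recall that $\cC^G = \lim_{BG}\rho$ and $\cC^H = \lim_{BH}\rho|_{BH}$. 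Since $\Cat_k$ is complete (and the argument uses nothing beyond completeness, so it applies verbatim to any complete $\infty$-category), the right Kan extension $q_*\rho \colon B(G/H) \to \Cat_k$ of $\rho$ along $q$ exists and is computed pointwise.

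The main step is to identify $q_*\rho$. Evaluating at the unique object $* \in B(G/H)$, the pointwise formula for the right Kan extension expresses $(q_*\rho)(*)$ as the limit of $\rho$ pulled back along the projection from the comma $\infty$-category $* \downarrow q$, whose objects are pairs $(\star, \phi \colon * \to q(\star))$ with $\star$ the unique object of $BG$. A direct inspection identifies $* \downarrow q$ with the translation groupoid of the (transitive) left $G$-action on $G/H$: it is connected, and the automorphism group of the object corresponding to the identity coset is exactly $H$, so $* \downarrow q \simeq BH$, compatibly with the projection to $BG$, which is thereby identified with $\iota$. Hence
\[
(q_*\rho)(*) \simeq \lim_{BH}\rho|_{BH} = \cC^H,
\]
and $q_*\rho \colon B(G/H) \to \Cat_k$ is the sought-after $(G/H)$-action on $\cC^H$ (its underlying object being $\cC^H$).

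Finally, I would invoke the composability of right Kan extensions: since restriction functors satisfy $(p\circ q)^* = q^* \circ p^*$, passing to right adjoints gives $\pi_* \simeq p_* \circ q_*$. Applying this to $\rho$ yields
\[
\cC^G = \lim_{BG}\rho = \pi_*\rho \simeq p_*(q_*\rho) = \lim_{B(G/H)}(q_*\rho) = (\cC^H)^{G/H},
\]
which is the desired equivalence. The only genuinely nontrivial point is the second step — matching the pointwise right Kan extension with the fiberwise $H$-invariants and verifying that the resulting functor on $B(G/H)$ restricts, under the equivalence $* \downarrow q \simeq BH$, to the correct action; the composability of right Kan extensions and the pointwise formula are standard facts in the $\infty$-categorical setting (e.g. \cite{HTT}), valid because $\Cat_k$ is complete.
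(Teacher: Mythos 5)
Your proof is correct and takes essentially the same route as the paper: both form the right Kan extension of $\rho$ along $q \colon BG \to B(G/H)$, identify its value at the unique object with $\cC^H = \lim \rho|_{BH}$, and conclude from the compatibility of limits with right Kan extensions ($\pi_* \simeq p_* \circ q_*$). The only divergence is in justifying the middle identification: the paper cites \cite[Proposition 4.3.3.10]{HTT}, using that $BG \to B(G/H)$ is a Cartesian fibration with fiber $BH$, whereas you unwind the pointwise limit formula and identify the comma category $* \downarrow q$ with the translation groupoid of the transitive $G$-action on $G/H$, which is equivalent to $BH$ compatibly with the projection to $BG$ --- a correct, hands-on verification of the same fact.
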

	\begin{proof}
		We have the fiber sequence 
		\begin{center}
			\begin{tikzcd}
				BH \arrow[r] \arrow[d] & BG \arrow[d, "f"] \\ 
				* \arrow[r] & B(G/H)
			\end{tikzcd}
		\end{center}
		where $f$ is a Cartesian fibration of $\infty$-categories (since $G \to H$ is a surjection of groups). 
		Now define \[f_*(\rho): B(G/H) \to \Cat_k\] to be the right Kan extension of $\rho$ along $f$ (which exists since $\Cat_k$ is complete). 
  		The diagram 
		\begin{center}
			\begin{tikzcd}
				BG \arrow[rr] \arrow[rd, "f"'] && B(G/H) \arrow[ld, "\mathrm{id}"] \\
				& B(G/H) &
			\end{tikzcd}
		\end{center}
		satisfies the hypotheses of (the opposite version of) \cite[Proposition 4.3.3.10]{HTT} since all morphisms in $BG$ are $f$-Cartesian and all morphisms in $B(G/H)$ are $\mathrm{id}$-Cartesian. 
		This forces that $f_*(\rho)(*) \simeq \lim \rho|_{BH} \simeq \cC^{H}$. Hence $f_*(\rho)$ defines a canonical $G/H$-action on $\cC^H$, and since limits may be computed after right Kan extension, we conclude  
		\[
		\cC^G = \lim \rho \simeq \lim f_*(\rho) \simeq (\cC^H)^{G/H}. \qedhere
		\]
	\end{proof}
	
	\begin{remark}
	The analog of Lemma~\ref{trans} in the setting of non-enhanced categories was proved in \cite[Proposition~3.3]{BO-equivariant-cats}, but we will need the enhanced version for our arguments. 
	In fact, as mentioned in the statement of Lemma~\ref{trans}, our result applies to any complete $\infty$-category $\cD$ in place of $\Cat_k$; 
	in particular, taking $\cD$ to be the category of non-enhanced $k$-linear categories, we recover \cite[Proposition~3.3]{BO-equivariant-cats}. 
	\end{remark}

\subsection{Proof of Theorem~\ref{theorem-derived-equivalence}} 
	The rest of this section is dedicated to the proof of Theorem~\ref{theorem-derived-equivalence}. 
	The basic strategy is to use the inductive definition of $\cyclicquotient(X_1, \dots, X_n)$ and Lemma~\ref{trans} to reduce to the case $n = 2$, where we have a crepant resolution of a cyclic quotient singularity, in which case we may appeal to the following result of Krug, Ploog, and Sosna. 

	\begin{theorem}[\cite{KPS}]
		\label{cyclic}
		Let $X$ be a smooth quasi-projective variety with an action by a cyclic group $G = \bZ/m$. Let $S \subset X$ be the fixed locus, and assume that $\mathrm{codim}_X(S) = m$ and a generator $g \in G$ acts on the normal bundle $N_{S/X}$ by multiplication by a primitive $m$-th root of unity. 
		Then in the diagram 
		\begin{center}
			\begin{tikzcd}
				\on{Bl}_S X \arrow[r, "p"] \arrow[swap, d, "q"] & X \arrow[d] \\
				(\on{Bl}_S X)/G \arrow[r] & X/G
			\end{tikzcd}
		\end{center}
		there is an isomorphism between $\tilde{Y} = (\on{Bl}_S X)/G$ and the fine moduli space $\on{Hilb}^G(X)$ of $G$-clusters on $X$, where $q : \on{Bl}_SX \to \tilde{Y}$ is the universal family. Moreover, the functor 
		\begin{equation}
		\label{kps-equivalence}
		\Dperf(\tilde{Y}) \xrightarrow{\mathrm{triv}} \Dperf(\tilde{Y})^G \xrightarrow{q^*} \Dperf(\on{Bl}_SX)^G \xrightarrow{p_*} \Dperf(X)^G,
		\end{equation} 
		where $\on{triv}$ gives each object the trivial $G$-equivariant structure, is an equivalence. 
	\end{theorem}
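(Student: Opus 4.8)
The plan is to follow the derived McKay correspondence of Bridgeland--King--Reid \cite{BKR}, while circumventing the degeneration of its fiber-dimension hypothesis when $m \geq 4$. I would first establish the geometric assertion by reduction to a local model. Since $g$ acts on $N_{S/X}$ by a scalar primitive $m$-th root of unity, Cartan's lemma linearizes the action near each point of $S$ and presents $X$, $G$-equivariantly and \'etale-locally over $S$, as $\bC^m \times S$ with $G$ scaling $\bC^m$ and fixing $S$. In this model $\on{Hilb}^G(\bC^m)$ is classical: it is the total space $\mathrm{Tot}(\cO_{\bP^{m-1}}(-m)) = \mathrm{Tot}(K_{\bP^{m-1}})$, canonically identified with $(\Bl_0 \bC^m)/G$, the universal $G$-cluster being $\Bl_0 \bC^m$ together with its blowdown and quotient maps. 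I would then check that these local descriptions glue, using the naturality of the blowup and of $\on{Hilb}^G(-)$, to identify $\tilde Y = (\Bl_S X)/G$ with $\on{Hilb}^G(X)$ and $q$ with the universal family. Along the way one records that $\tilde Y$ is smooth --- the fixed locus $E = \bP(N_{S/X})$ of $G$ on $\Bl_S X$ being a smooth divisor --- that $q$ is finite flat of degree $m$ by miracle flatness, and that the resolution $\tilde Y \to X/G$ is crepant because the scalar action lies in $\mathrm{SL}_m$.

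Next I would identify the functor in \eqref{kps-equivalence} with the integral functor $\Phi$ of kernel $\cO_{\cZ}$, where $\cZ = \Bl_S X \hookrightarrow \tilde Y \times X$ is the universal cluster; this follows from the flatness of $q$ and the projection formula, and exhibits $\Phi$ as linear over $X/G$. The heart of the matter is that $\Phi$ is an equivalence. The Bridgeland--King--Reid criterion does \emph{not} apply directly: in the local model the fiber product $\tilde Y \times_{X/G} \tilde Y$ contains $\bP^{m-1} \times \bP^{m-1}$ over the image of the origin, of dimension $2(m-1) > m + 1 = \dim \bC^m + 1$ as soon as $m \geq 4$. Instead, since $\Phi$ is linear over $X/G$ and $\Dperf$ satisfies \'etale descent, it suffices to prove $\Phi$ is an equivalence \'etale-locally over $X/G$, hence on $\bC^m \times S$; and by Theorem~\ref{tensor} the factor $S$ contributes only a $\Dperf(S)$-linear tensor factor, reducing us to $\bC^m$.

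On $\cK = \mathrm{Tot}(K_{\bP^{m-1}})$ I would argue by tilting, this being the derived McKay correspondence for the scalar cyclic quotient singularity $\frac{1}{m}(1, \dots, 1)$. Writing $\rho \colon \cK \to \bP^{m-1}$ for the projection, one has $\rho_* \cO_{\cK} = \bigoplus_{k \geq 0} \cO_{\bP^{m-1}}(mk)$, so a direct cohomology computation shows that the bundle $T = \bigoplus_{i=0}^{m-1} \rho^* \cO_{\bP^{m-1}}(i)$ satisfies $\Ext^{>0}(T,T) = 0$ and generates $\Db(\cK)$, and that $\End(T) \cong \bC[x_1, \dots, x_m] \rtimes G$, the skew group algebra presenting $\Dperf(\bC^m)^G$. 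I would then verify that $\Phi$ sends the summands $\rho^* \cO(i)$ to the corresponding character-twisted free equivariant modules and induces precisely this isomorphism on endomorphism algebras; since $T$ generates $\Db(\cK)$, this identifies $\Phi$ with the tilting equivalence locally, and the global statement then follows by descent.

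The main obstacle is exactly the failure of the Bridgeland--King--Reid dimension bound for $m \geq 4$, which forces the explicit analysis of $\Db(\mathrm{Tot}(K_{\bP^{m-1}}))$ in place of the general McKay machinery. A secondary difficulty is the bookkeeping required to glue the local identifications of $\on{Hilb}^G$ and of the tilting equivalence into the single global functor $\Phi$ while keeping track of $G$-equivariant structures throughout.
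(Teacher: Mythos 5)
First, a point of context: the paper does not prove this theorem at all --- it is imported verbatim from Krug--Ploog--Sosna \cite{KPS}, as the citation in the theorem header indicates, and is then applied with $m=2$ in the proof of Theorem~\ref{theorem-kps}. So the only meaningful comparison is with the argument in \cite{KPS} itself, which is genuinely different from yours: there the equivalence is obtained globally, by comparing two semiorthogonal decompositions of $\Dperf(\on{Bl}_S X)^G$ --- one induced by the equivariant blow-up formula (pieces built from $\Dperf(X)^G$ and copies of $\Db(E)$), the other induced by viewing $[\on{Bl}_S X/G]$ as a root stack over $\tilde Y$, since $G$ acts on $\on{Bl}_S X$ with fixed locus the divisor $E$ (pieces built from $\Dperf(\tilde Y)$ and copies of $\Db(E)$) --- and then cancelling pieces by mutation, which works out exactly when $\mathrm{codim}_X(S) = m$. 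Your proposal replaces this by \'etale localization plus tilting. Your diagnosis that the BKR fiber-dimension criterion fails for $m \ge 4$ is correct (and is exactly why \cite{KPS} rather than \cite{BKR} must be cited here), and your local model $\on{Hilb}^G(\bC^m) \cong (\Bl_0\bC^m)/G \cong \mathrm{Tot}(K_{\bP^{m-1}})$, together with the tilting bundle $T = \bigoplus_{i=0}^{m-1}\rho^*\cO(i)$ and the computation $\End(T)\cong \bC[x_1,\dots,x_m]\rtimes G$, are also correct.

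However, the step identifying $\Phi$ with the tilting equivalence fails as written. In the local model one has $q^*\rho^*\cO_{\bP^{m-1}}(i) \cong \cO_{\Bl_0\bC^m}(-iE)$ up to a character twist, so
\[
\Phi(\rho^*\cO(i)) \;\cong\; p_*\cO(-iE) \;\cong\; \mathfrak{m}^i,
\]
the $i$-th power of the ideal of the origin with its natural equivariant structure (again up to a character twist). For $1 \le i \le m-1$ this is \emph{not} a character-twisted free equivariant module, so $\Phi(T)$ is not the free module $\bigoplus_j R\otimes\chi_j$ whose endomorphism algebra realizes $\Dperf(\bC^m)^G \simeq \Db(R\rtimes G)$, and the claimed identification of $\Phi$ with "the" tilting equivalence collapses. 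What your plan actually requires is that $\bigoplus_{i=0}^{m-1}\mathfrak{m}^i$ (suitably twisted) is itself a tilting generator of $\Dperf(\bC^m)^G$: one must prove the equivariant vanishing $\Ext^{>0}_G(\mathfrak{m}^i\otimes\chi_{-i},\mathfrak{m}^j\otimes\chi_{-j})=0$, generation, and that $\Phi$ induces an isomorphism onto the endomorphism algebra $R\rtimes G$. These statements are true (they amount to an equivariant form of Auslander's theorem), but they constitute precisely the substance of the local theorem, and nothing in your sketch supplies them. A secondary, fixable issue: your gluing step needs more than Cartan's lemma, which is only analytic-local; moreover an equivariant \'etale map $U \to X$ induces an \'etale map $U/G \to X/G$ only when it reflects stabilizers, so the descent over $X/G$ requires fixed-point-reflecting equivariant \'etale covers (these exist, but must be invoked explicitly).
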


	For our inductive argument, it will be useful to prove the following slightly more precise version of Theorem~\ref{theorem-derived-equivalence}, which keeps track of the $\bZ/2$-action in play. 
	
	\begin{theorem}
	\label{theorem-kps}
	Let $X_1, \dots, X_n$ be smooth quasi-projective varieties, each equipped with a $\bZ/2$-action whose fixed locus is a divisor. 
Then there is a $\bZ/2$-equivariant equivalence 
\begin{equation*} 
\Dperf(\cyclicquotient(X_1, \dots, X_n)) \simeq \Dperf([X_1 \times \cdots \times X_n/G_n])
\end{equation*} 
where $\bZ/2$ acts on the left through its action on $\cyclicquotient(X_1, \dots, X_n)$ given by Proposition~\ref{chvar} and on the right through the natural action of $\bZ/2 \cong (\bZ/2)^{n}/G_n$ on $[X_1 \times \cdots \times X_n/G_n]$. 
	\end{theorem}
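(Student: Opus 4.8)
The plan is to induct on $n$, combining the inductive construction of $\cyclicquotient(X_1,\dots,X_n)$ from Proposition~\ref{chvar} with the derived McKay equivalence of Theorem~\ref{cyclic} and the categorical machinery of \S\ref{section-enhanced-categories}--\S\ref{section-group-actions}. For $n=1$ we have $\cyclicquotient(X_1)=X_1$ and $G_1$ trivial, so the statement is the identity equivalence carrying its $\bZ/2$-action. So fix $n\ge 2$ and assume, as inductive hypothesis, a $\bZ/2$-equivariant equivalence
\[
\Dperf(\cyclicquotient(X_1,\dots,X_{n-1}))\simeq \Dperf(X_1\times\cdots\times X_{n-1})^{G_{n-1}},
\]
where on the right I use Example~\ref{example-quotient-stack} to replace the quotient stack by the $G_{n-1}$-invariant category, equipped with its residual $(\bZ/2)^{n-1}/G_{n-1}\cong\bZ/2$-action.

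By Proposition~\ref{chvar} (via Lemma~\ref{induction}), $\cyclicquotient(X_1,\dots,X_n)$ is the crepant resolution of $(\cyclicquotient(X_1,\dots,X_{n-1})\times X_n)/G$ with $G\subset \bZ/2\times\bZ/2$ the diagonal. Its fixed locus $S=\bD(X_1,\dots,X_{n-1})\times D_n$ has codimension $2$, and the diagonal generator acts on $N_S$ by the scalar $-1$, a primitive square root of unity. These are precisely the hypotheses of Theorem~\ref{cyclic} with $m=2$, which then yields an equivalence $\Dperf(\cyclicquotient(X_1,\dots,X_n))\simeq \Dperf(\cyclicquotient(X_1,\dots,X_{n-1})\times X_n)^{G}$. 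The first delicate point is to upgrade this to the residual $\bZ/2\cong(\bZ/2\times\bZ/2)/G$: since the ambient $\bZ/2\times\bZ/2$ acts on $X=\cyclicquotient(X_1,\dots,X_{n-1})\times X_n$ commuting with $G$, it acts compatibly on $\Bl_S X$ and on $(\Bl_S X)/G$, hence on the entire diagram defining~\eqref{kps-equivalence}; because $\mathrm{triv}$, $q^*$, and $p_*$ are induced by equivariant geometric operations, they assemble into a morphism in $\mathrm{Fun}(B(\bZ/2),\Cat_k)$, so the equivalence is $\bZ/2$-equivariant.

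Next I would reorganize the target categorically. By the naturality of Theorem~\ref{tensor} and the inductive hypothesis, $\Dperf(\cyclicquotient(X_1,\dots,X_{n-1})\times X_n)^{G}\simeq (\cD^{G_{n-1}}\otimes\cE)^{G}$, where $\cD=\Dperf(X_1\times\cdots\times X_{n-1})$, $\cE=\Dperf(X_n)$, and $G$ acts diagonally through the residual $(\bZ/2)^{n-1}/G_{n-1}$ and the involution on $X_n$. To match this with $\Dperf(X_1\times\cdots\times X_n)^{G_n}$, write $(\bZ/2)^n=(\bZ/2)^{n-1}\times\bZ/2$ and use the chain $G_{n-1}\times\{0\}\subset G_n\subset(\bZ/2)^n$: a short computation with the sum map shows $G_n/(G_{n-1}\times\{0\})\cong\bZ/2$ acts on $\cD^{G_{n-1}}\otimes\cE$ exactly as the diagonal $G$. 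Applying Lemma~\ref{trans} to $1\to G_{n-1}\times\{0\}\to G_n\to G_n/(G_{n-1}\times\{0\})\to 1$ together with Theorem~\ref{tensor} then gives
\[
\Dperf(X_1\times\cdots\times X_n)^{G_n}\simeq \big((\cD\otimes\cE)^{G_{n-1}}\big)^{G}\simeq (\cD^{G_{n-1}}\otimes\cE)^{G},
\]
provided $-\otimes\cE$ commutes with the $G_{n-1}$-invariants. When $X_n$ is proper this is Lemma~\ref{lemma-tensor-limit}; the general quasi-projective case reduces to the observation that $|G_{n-1}|$ is a power of $2$, hence invertible, so the $G_{n-1}$-invariants are the image of the averaging idempotent on $\cD$ and thus a retract, which any functor — in particular $-\otimes\cE$ — preserves. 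Combining with the previous paragraph and Example~\ref{example-quotient-stack} completes the inductive step.

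I expect the main obstacle, and the reason for working in the enhanced $\infty$-categorical framework, to be the careful propagation of the residual $\bZ/2$-equivariance through the whole chain: each of Theorem~\ref{cyclic}, Theorem~\ref{tensor}, and Lemma~\ref{trans} must be applied not merely as an equivalence of categories but as an equivalence of objects of $\mathrm{Fun}(B(\bZ/2),\Cat_k)$, and the three $\bZ/2$-actions in play — the outer involution from Lemma~\ref{induction}, the residual action $(\bZ/2)^n/G_n$ through the sum map, and the diagonal $G\subset\bZ/2\times\bZ/2$ — must be checked to correspond under the identifications above. This bookkeeping is exactly what the naturality of Theorem~\ref{tensor} in both variables and the right-Kan-extension description of invariants in Lemma~\ref{trans} are designed to keep under control.
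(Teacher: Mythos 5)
Your proposal follows the paper's proof essentially step for step: induction on $n$; the equivalence $\Dperf(\cyclicquotient(X_1,\dots,X_n))\simeq\Dperf(\cyclicquotient(X_1,\dots,X_{n-1})\times X_n)^G$ obtained from Theorem~\ref{cyclic} applied to the diagonal $G\subset\bZ/2\times\bZ/2$; the residual $\bZ/2$-equivariance read off from the equivariance of the blowup diagram and the explicit form of~\eqref{kps-equivalence}; then Theorem~\ref{tensor}, commutation of $-\otimes\Dperf(X_n)$ with $G_{n-1}$-invariants, and Lemma~\ref{trans} for $1\to G_{n-1}\to G_n\to G_n/G_{n-1}\to 1$ together with the identification of $G_n/G_{n-1}$ with the diagonal $G$. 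Everywhere it overlaps with the paper's argument, it is correct.

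The one place where you add something beyond the paper is also the one place where your argument fails. You rightly observe that Lemma~\ref{lemma-tensor-limit} requires $\Dperf(X_n)$ to be smooth \emph{and proper}, whereas the theorem only assumes quasi-projective (the paper itself cites Lemma~\ref{lemma-tensor-limit} at this step without comment, so the subtlety is real). But your proposed repair --- that since $|G_{n-1}|$ is invertible, $\cD^{G_{n-1}}$ is ``the image of the averaging idempotent on $\cD$'' and hence a retract of $\cD$, preserved by any functor --- is false. There is no averaging idempotent: the only candidate endofunctor $N=\bigoplus_{g}g^*$ satisfies $N\circ N\simeq N^{\oplus|G|}$ rather than $N$, and one cannot divide a functor by $|G|$. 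More decisively, $\cD^G$ is in general not a retract of $\cD$ in $\Cat_k$: take $\cD=\Dperf(\bC)$ with the trivial $\bZ/2$-action, so that $\cD^{\bZ/2}\simeq\Dperf(\bC[\bZ/2])\simeq\Dperf(\bC)\times\Dperf(\bC)$; a retract in $\Cat_k$ would induce a split injection $\bZ^{2}\hookrightarrow\bZ$ on $\rK_0$, which does not exist. A correct repair of this step does exist, but it needs a different idea: when $|G|$ is invertible, every object of $\cD^G$ is a direct summand of an induced object $\on{Ind}(E)=\bigoplus_g g^*E$ (split by averaging the equivariant structure \emph{maps}, which is legitimate at the level of morphisms), and this identifies $\cD^G$ with the coinvariants $\colim_{BG}\cD$ (equivalently, with the idempotent completion of the crossed product of $\cD$ by $G$); one then uses that $-\otimes\cE$ commutes with all \emph{colimits}, since the monoidal structure on $\Cat_k$ is closed. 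Alternatively, note that all applications in the paper involve projective $X_i$, where Lemma~\ref{lemma-tensor-limit} applies verbatim and your proof agrees with the paper's.
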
 
	
	\begin{proof}
	Throughout this proof, it will be convenient to identify the derived category of a quotient stack with the corresponding invariant category, as in Example~\ref{example-quotient-stack}.
	
	We apply Theorem~\ref{theorem-kps} with 
	$X = \cyclicquotient(X_1, \dots, X_{n-1}) \times X_{n}$ and $G  \subset \bZ/2 \times \bZ/2$ the diagonal subgroup. 
	Then the fixed locus is the codimension $2$ subvariety $S = \bD(X_1, \dots, X_{n-1}) \times D_n$ where $\bD(X_1, \dots, X_{n-1})$ and $D_n$ are 
	the fixed loci of the $\bZ/2$-actions on each factor, and $\tilde{Y} =  \cyclicquotient(X_1, \dots, X_{n})$ by the construction of Proposition~\ref{chvar}.  
	We obtain an equivalence 
	\begin{equation}
	\label{equivalence-induction}
	\Dperf(\cyclicquotient(X_1, \dots, X_{n})) \simeq \Dperf(\cyclicquotient(X_1, \dots, X_{n-1}) \times X_{n})^G.
	\end{equation} 
	Note that $\bZ/2$ acts on the left through its action on $\cyclicquotient(X_1, \dots, X_{n})$, and on right by virtue of the fact that 
	$\bZ/2 \cong (\bZ/2 \times \bZ/2)/G$; we claim the equivalence is equivariant for these actions. 
	Indeed, the diagram 
			\begin{center}
			\begin{tikzcd}
				\Bl_{\bD(X_1, \dots, X_{n-1}) \times D_n}(\cyclicquotient(X_1, \dots, X_{n-1}) \times X_{n}) 
	 \arrow[r, "p"] \arrow[swap, d, "q"] & \cyclicquotient(X_1, \dots, X_{n-1}) \times X_n \arrow[d] \\ 
	 		(\Bl_{\bD(X_1, \dots, X_{n-1}) \times D_n} \cyclicquotient(X_1, \dots, X_{n-1}) \times X_{n})/G \arrow[r] & (\cyclicquotient(X_1, \dots, X_{n-1}) \times X_{n})/G
			\end{tikzcd}
		\end{center}
	is $(\bZ/2)^2$-equivariant, where $ \cyclicquotient(X_1, \dots, X_{n-1}) \times X_n$ is equipped with the product $\bZ/2$-action 
	and the other varieties are equipped with the induced $(\bZ/2)^2$-actions, which on the bottom row factor via the quotient 
	$(\bZ/2)^2 \to \bZ/2$ by the diagonal subgroup $G$. 
	Then the claimed $\bZ/2$-equivariance follows from the form of the equivalence~\eqref{kps-equivalence}.   
	
	Now we prove the result by induction on $n$. We compute 
		\begin{align*}
			\Dperf(\cyclicquotient(X_1,\dots,X_{n-1}) \times X_n) &\simeq \Dperf(\cyclicquotient(X_1,\dots,X_{n-1})) \otimes \Dperf(X_n)\\
			&\simeq \Dperf(X_1 \times \cdots \times X_{n-1})^{G_{n-1}} \otimes \Dperf(X_n) \\
			&\simeq \left(\Dperf(X_1 \times \cdots \times X_{n-1}) \otimes \Dperf(X_n)\right)^{G_{n-1}} \\
			&\simeq \Dperf(X_1 \times X_2 \times \cdots \times X_n)^{G_{n-1}},
		\end{align*}
where the first and fourth equivalences follow from Theorem~\ref{tensor}, the second follows from induction, the third follows from the commutativity of tensor product and limits (Lemma~\ref{lemma-tensor-limit}), and where each isomorphism is naturally equivariant with respect to the $(\bZ/2)^2$-actions on each category. 
To be more explicit about the action on the final line, by Lemma~\ref{trans} there is natural action by 
$(\bZ/2)^{n}/G_{n-1} = (\bZ/2)^{n-1}/G_{n-1} \times \bZ/2 \cong \bZ/2 \times \bZ/2$, and under this identification 
$G_{n}/G_{n-1} \subset (\bZ/2)^{n}/G_{n-1}$ corresponds to the diagonal subgroup $G \subset \bZ/2 \times \bZ/2$. 
Therefore, passing to $G$-invariants, the above equivalence gives a $\bZ/2$-equivariant equivalence 
\begin{align*}
\Dperf(\cyclicquotient(X_1,\dots,X_{n-1}) \times X_n)^G & \simeq ( \Dperf(X_1 \times X_2 \times \cdots \times X_n)^{G_{n-1}})^{G_{n}/G_{n-1}} \\ 
& \simeq \Dperf(X_1 \times X_2 \times \cdots \times X_n)^{G_n}
\end{align*} 
where the final line is given by Lemma~\ref{trans}. 
Combined with the $\bZ/2$-equivariant 
equivalence~\eqref{equivalence-induction}, this finishes the proof of the theorem. 
\end{proof} 

\begin{remark}
\label{remark-Phi-skyscraper}
Let $\Phi \colon \Dperf(\cyclicquotient(X_1, \dots, X_n)) \xrightarrow{\sim} \Dperf([X_1 \times \cdots \times X_n/G_n])$ be the equivalence provided by 
Theorem~\ref{theorem-kps}, and let 
$\pi \colon X_1 \times \cdots \times X_n \to [X_1 \times \cdots \times X_n/G]$ be the quotient map. 
For later use, we note that if $p \in \cyclicquotient(X_1, \dots, X_n)$ is a closed point, then $\pi^*\Phi(\cO_p)$ is a sheaf which admits a filtration whose graded pieces are structure sheaves of points. 
Indeed, first note that applying the equivalence~\eqref{equivalence-induction} followed by the forgetful functor 
\begin{equation*}
    \Dperf(\cyclicquotient(X_1, \dots, X_{n-1}) \times X_{n})^G \to \Dperf(\cyclicquotient(X_1, \dots, X_{n-1}) \times X_{n})
\end{equation*}
to $\cO_p$ gives the structure sheaf of a $G$-cluster on $\cyclicquotient(X_1, \dots, X_{n-1}) \times X_{n}$, which in particular admits a filtration whose graded pieces are structure sheaves of points. 
Now the claim follows by induction from the construction of $\Phi$ in the proof of Theorem~\ref{theorem-kps}. 
\end{remark}


\section{Stability conditions} 
\label{section-stability}

The goal of this section is to prove  Theorem~\ref{theorem-stability-conditions}. 
We start by reviewing general procedures for inducing stability conditions on quotient stacks and on products with curves. 

\subsection{Stability conditions and group actions}  
\label{section-stability-group-actions}
\begin{definition}
Let $\cD$ be a triangulated category, and let $v \colon \rK_0(\cD) \to \Lambda$ be a homomorphism from the Grothendieck group of $\cD$ to an abelian group $\Lambda$. 
A \emph{pre-stability condition on $\cD$ with respect to $v$} (or with respect to $\Lambda$ when $v$ is clear from context) 
is a pair $\sigma = (\cA, Z)$, where $\cA \subset \cD$ is the heart of a bounded t-structure and $Z \colon \Lambda \to \bC$ is a homomorphism satisfying the following conditions: 
\begin{enumerate}
\item \label{stability-function} For any $0 \neq E \in \cA$ we have $Z(v(E)) \in \bH \sqcup \bR_{< 0}$, 
where $\bH = \set{ z \in \bC \st \Im z > 0 }$ is the strict upper half plane. 
\item Harder--Narasimhan filtrations with respect to $\sigma$ exist for objects in $\cA$, where recall that for $E \in \cA$ the slope is defined by 
\begin{equation*}
\mu_Z(E) = \begin{cases}
- \frac{\Re Z(v(E))}{\Im Z(v(E))} & \text{if } \Im Z(v(E)) > 0 , \\ 
+ \infty & \text{otherwise}, 
\end{cases}
\end{equation*} 
$E$ is called $\sigma$-(semi)stable if $\mu_Z(F) (\leq) < \mu_Z(E/F)$ for every proper subobject $0 \neq F \subset E$, and a Harder--Narasimhan filtration of $E$ with respect to $\sigma$ is a filtration with $\sigma$-semistable factors of decreasing slope. 
\end{enumerate}
If in condition~\eqref{stability-function} we instead require the weaker condition $Z(v(E)) \in \bH \sqcup \bR_{\leq 0}$, then $\sigma$ is called \emph{weak pre-stability condition on} $\cD$.

If $\Lambda = \rK_0(\cD)$ we simply say $\sigma$ is a pre-stability condition. 
If $\cD$ is a proper over a field $k$ (i.e. $\sum \dim_k \Hom(E,F[i]) < \infty$ for all $E, F \in \cD$), then the numerical Grothendieck group 
$\Knum(\cD)$ is defined as the quotient of $\rK_0(\cD)$ by the kernel of the Euler paring 
\begin{equation*}
\chi(E,F) = \Sigma_i (-1)^i \dim_k \Ext^i(E,F), 
\end{equation*} 
and we say $\sigma$ is \emph{numerical} if 
$v \colon \rK_0(\cD) \to \Lambda$ 
factors via $\rK_0(\cD) \to \Knum(\cD)$. 

Assume $\Lambda$ is a free abelian group of finite rank and $\|-\|$ is a norm on $\Lambda \otimes \bR$. 
Then we say $\sigma$ satisfies the \emph{support property} 
if there exists a constant $C > 0$ such that for all $\sigma$-semistable objects $0 \neq E \in \cA$ we have 
\begin{equation*}
\| v(E) \| \leq C |Z(v(E))|. 
\end{equation*} 
We say $\sigma$ is a \emph{stability condition with respect to $\Lambda$} if it is a pre-stability condition satisfying the support property. 
When $\cD$ is proper over a field, we say $\sigma$ is a \emph{full numerical stability condition} if it is a stability condition with respect to $\Knum(\cD)$. 
\end{definition} 

\begin{remark}
By abuse of notation, we shall sometimes conflate $Z \circ v$ and $Z$. 
\end{remark}

We denote by $\Stab_{\Lambda}(\cD)$ the set of stability conditions with respect to $\Lambda$. Recall that this set naturally has the structure of a complex manifold, such that the map 
\begin{equation*}
\cZ \colon \Stab_{\Lambda}(\cD) \to \Hom(\Lambda, \bC), \quad (\cA, Z) \mapsto Z 
\end{equation*} 
is a local isomorphism \cite{bridgeland}. 

Below we will need to descend stability conditions which are invariant under a group action to the quotient by the group action. 
For simplicity, we explain this in the geometric case where $\cD$ is the derived category of a variety, but the results easily extend to the noncommutative case, as explained in \cite[Theorem 4.8]{enriques-categories}. 
Thus let $X$ be a variety equipped with an action by a finite group $G$, 
let $\Lambda$ be a free abelian group of finite rank equipped with a $G$-action, 
and let $v \colon \rK_0(X) \to \Lambda$ be a $G$-equivariant homomorphism. 
There is an induced $G$-action on $\Stab_{\Lambda}(X) \coloneqq \Stab_{\Lambda}(\Db(X))$; 
explicitly, the action of $g \in G$ on $\sigma = (\cA, Z) \in \Stab_{\Lambda}(X)$ is 
$g \cdot \sigma = ( g \cdot \cA, g \cdot Z)$ where $g \cdot \cA = g_* \cA$ and $g \cdot Z = Z \circ g^*$. 
In particular, we can consider the collection of $G$-invariant stability conditions, i.e. the fixed locus for this action: 
\begin{equation*}
\Stab_{\Lambda}(X)^{G} = \set{ \sigma \in \Stab_{\Lambda}(X) \st g \cdot \sigma = \sigma \text{ for all } g \in G }. 
\end{equation*} 
This locus is related to the space of stability conditions on the quotient stack $[X/G]$ as follows. 
Let $\pi \colon X \to [X/G]$ be the quotient morphism. 
We define the homomorphism 
\begin{equation*}
v^G = v \circ \pi^* \colon \rK_0([X/G]) \to \Lambda. 
\end{equation*} 
For $\sigma = (\cA, Z) \in \Stab_{\Lambda}(X)^G$, we define 
\begin{equation*}
\cA^G = \set{E \in \Db([X/G]) \st \pi^*E \in \cA } , \quad 
Z^G = Z \circ v^G, \quad 
\text{and} \quad 
\sigma^{G} = (\cA^G, Z^G).  
\end{equation*} 

\begin{theorem}[\cite{polishchuk, inducing-stability}]
\label{theorem-stability-G}
The locus $\Stab_{\Lambda}(X)^{G} \subset \Stab_{\Lambda}(X) $ is a union of connected 
components. 
Further, the above construction preserves the property that a stability condition is numerical, and induces a closed embedding 
\begin{equation*} 
(-)^G \colon \Stab_{\Lambda}(X)^{G} \to \Stab_{\Lambda}([X/G]) 
\end{equation*} 
with image 
\begin{equation*}
\Stab_{\Lambda}([X/G])^{\circ} = 
\set{ \bar{\sigma} = (\bar{\cA}, \bar{Z})   \in \Stab_{\Lambda}([X/G]) \st \pi_* \cO_X \otimes \bar{\cA} \subset \bar{\cA} }, 
\end{equation*}  
which is a union of connected components of $\Stab_{\Lambda}([X/G])$. 
Finally, for $\sigma \in \Stab_{\Lambda}(X)^{G}$, an object $E \in \Db([X/G])$ is 
$\sigma^G$-semistable if and only if $\pi^*E$ is $\sigma$-semistable. 
\end{theorem}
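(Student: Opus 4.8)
The plan is to study the two functors relating the derived categories, namely the pullback $\pi^* \colon \Db([X/G]) \to \Db(X)$ that forgets the equivariant structure and its adjoint $\pi_* \colon \Db(X) \to \Db([X/G])$, and to transport the data of a stability condition along them. The basic facts to exploit are that both functors are exact, that $\pi^*$ is conservative, that $\pi^*\pi_* F \cong \bigoplus_{g \in G} g^* F$ while $\pi_*\pi^*\bar{E} \cong \pi_*\cO_X \otimes \bar{E}$, and that (since $|G|$ is invertible) the counit $\pi^*\pi_* F \to F$ is split. The first step is the correspondence on hearts. Given a $G$-invariant $\sigma = (\cA, Z)$, the t-structure with heart $\cA$ is $G$-invariant, so its truncation functors are canonically $G$-equivariant and descend to $\Db([X/G]) = \Db(X)^G$, defining a bounded t-structure whose heart is exactly $\cA^G = \set{E : \pi^* E \in \cA}$. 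Conversely, starting from $\bar{\sigma} = (\bar{\cA}, \bar{Z}) \in \Stab_\Lambda([X/G])^{\circ}$, I would set $\cA := \langle \pi^*\bar{\cA}\rangle$ (the extension closure); this is automatically $G$-invariant since $g^*\pi^* = \pi^*$, and the hypothesis $\pi_*\cO_X \otimes \bar{\cA} \subset \bar{\cA}$ is precisely what forces $\pi^*$ to be t-exact for this t-structure, giving $\cA^G = \bar{\cA}$. These assignments are mutually inverse: for $F \in \cA$ the split counit exhibits $F$ as a summand of $\pi^*(\pi_* F)$ with $\pi_* F \in \cA^G$, so $\cA = \langle \pi^*\cA^G\rangle$, which yields injectivity of $(-)^G$.

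For the numerical data, I would first record that $\bar{Z} = Z^G$ is literally the same homomorphism $\Lambda \to \bC$ as $Z$, the only difference being composition with $v^G = v \circ \pi^*$ rather than $v$; in particular $\cZ_{[X/G]} \circ (-)^G = \cZ_X$ on the invariant locus. The stability-function axiom transfers at once: for $0 \neq E \in \cA^G$ we have $\pi^* E \neq 0$ by conservativity and $\pi^* E \in \cA$, hence $Z^G(E) = Z(\pi^* E) \in \bH \sqcup \bR_{<0}$. The key point is the semistability correspondence, that $E$ is $\sigma^G$-semistable if and only if $\pi^* E$ is $\sigma$-semistable, and I expect this to follow from uniqueness of Harder--Narasimhan filtrations: the HN filtration of $\pi^* E$ in $\cA$ is unique, hence preserved by the $G$-action (using $G$-invariance of $\sigma$ and $g^*\pi^* E \cong \pi^* E$), hence canonically equivariant, hence descends to a filtration of $E$ in $\cA^G$. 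The easy implication, that $\pi^* E$ semistable implies $E$ semistable, is immediate from exactness of $\pi^*$ and $Z^G = Z \circ \pi^*$; applying it to the descended filtration shows its factors are $\sigma^G$-semistable of the correct slopes, which simultaneously gives existence of HN filtrations in $\cA^G$ and the remaining implication. The support property then transfers with the same constant and norm, since for $\sigma^G$-semistable $E$ the object $\pi^* E$ is $\sigma$-semistable and $\|v^G(E)\| = \|v(\pi^* E)\| \leq C\,|Z(\pi^* E)| = C\,|Z^G(E)|$. Finally, numericality is preserved because adjunction makes $\pi^*$ compatible with Euler pairings, $\chi_X(\pi^* A, C) = \chi_{[X/G]}(A, \pi_* C)$, so $\pi^*$ sends $\ker\chi_{[X/G]}$ into $\ker\chi_X$ and $v^G$ factors through $\Knum$.

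It remains to establish the topological assertions, which I expect to be the main obstacle. The two inclusions above identify the image of $(-)^G$ with $\Stab_\Lambda([X/G])^{\circ}$: membership in that locus is exactly the $G$-invariance of the pulled-back heart that the inverse construction needs, via $\pi^*(\pi_*\cO_X \otimes \bar{E}) = \bigoplus_{g} g^*\pi^*\bar{E}$. Since $(-)^G$ is injective and intertwines the two local isomorphisms $\cZ_X$ and $\cZ_{[X/G]}$ of Bridgeland, it is a local homeomorphism onto its image and hence an open embedding. Upgrading this to a closed embedding with image a union of connected components is the delicate part: here I would invoke Bridgeland's deformation theorem together with Polishchuk's theory of constant families of t-structures to show that the defining conditions (invariance of the central charge on the source, and $\pi_*\cO_X \otimes \bar{\cA} \subset \bar{\cA}$ on the target) persist under small deformations, the point being that $\pi_*\cO_X \otimes -$ is an autoequivalence compatible with the entire construction so that $\Stab_\Lambda([X/G])^\circ$ is preserved by deformation. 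Controlling how the heart, and thus this last condition, behaves across wall-crossings is exactly where the real work lies, and for this I would reduce to the corresponding statements in \cite{polishchuk, inducing-stability}.
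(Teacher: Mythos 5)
Your proposal is correct and takes essentially the same route as the paper, whose proof is purely a citation: the set-theoretic correspondence you work out explicitly (descent of hearts and of Harder--Narasimhan filtrations via their uniqueness, the semistability criterion under $\pi^*$, and the transfer of the support property and numericality) is exactly the content of \cite[Proposition 2.2.3]{polishchuk} together with the support-property adaptation the paper attributes to \cite[Theorem 10.1]{CY3-stability}, while the topological statements (closed embedding, images being unions of connected components) that you defer to the literature are precisely what the paper cites from \cite[Theorem 1.1]{inducing-stability}. The two slips in your final paragraph --- describing $\pi_*\cO_X \otimes - \simeq \pi_*\pi^*$ as an autoequivalence (it is only an exact endofunctor) and inferring an open embedding merely from injectivity plus intertwining of the local isomorphisms to $\Hom(\Lambda,\bC)$ (continuity of $(-)^G$ is part of what must be proved) --- both lie inside the portion you explicitly delegate to \cite{polishchuk, inducing-stability}, so they do not affect the soundness of the overall plan.
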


\begin{proof}
At the set-theoretic level, the bijection between $\Stab_{\Lambda}(X)^{G}$ and $\Stab_{\Lambda}([X/G])^{\circ}$ 
is \cite[Proposition 2.2.3]{polishchuk}. 
The finer statements about the stability manifolds are \cite[Theorem 1.1]{inducing-stability}. 
To be more precise, our statement of the result differs slightly from the quoted ones, 
because we consider stability conditions satisfying the support property with respect to $\Lambda$; 
however, the arguments of \cite{polishchuk, inducing-stability} go through in our context, 
cf. \cite[Theorem 10.1]{CY3-stability}. 
\end{proof}

\begin{remark}
There is a similar statement for pre-stability conditions: the construction 
$(-)^G$ gives a bijection between the set of $G$-invariant pre-stability conditions $\sigma$ on $\Db(X)$ 
and the set of pre-stability conditions $\bar{\sigma}$ on $\Db([X/G])$ for which tensoring by 
$\pi_* \cO_X$ is $t$-exact. 
\end{remark} 

\subsection{Products with curves}
\label{section-products-with-curves} 
In \cite{products}, Liu proves the following result for stability conditions on the product of a variety with a curve. 
We say a pre-stability condition $\sigma = (\cA, Z)$ is \emph{rational} if the image of $Z$ lies in $\bQ + \bQ i$.  

\begin{theorem}[\cite{products}]
\label{theorem-product}
Let $X$ be a smooth projective variety,  
let $\sigma = (\cA, Z)$ be a rational stability condition on $X$,  
let $C$ be a smooth projective curve with an ample line bundle $L$, 
and let $s, t \in \bQ_{> 0}$. 
Then there is a rational stability condition 
$\sigma_{C}^{L, s,t} = (\cA_{C}^{L, t}, Z_C^{L,s,t})$ on $X \times C$, 
which is numerical if $\sigma$ is so.  
\end{theorem}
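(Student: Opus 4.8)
The plan is to realize $\sigma_C^{L,s,t}$ through the standard ``tilt a weak stability condition'' paradigm, with the curve direction playing the role of the extra dimension. First I would fix the target lattice: writing $p \colon X \times C \to X$ and $q \colon X \times C \to C$ for the projections, the Künneth decomposition over $C$ splits the class of any $E \in \Db(X \times C)$ into a ``rank over $C$'' part and an ``$L$-degree over $C$'' part, each of which is a class on $X$; pushing these through $v$ produces two homomorphisms $v^{(0)}, v^{(1)} \colon \rK_0(X \times C) \to \Lambda$, and I set $\Lambda_C = \Lambda \oplus \Lambda$ with $v_C = (v^{(0)}, v^{(1)})$. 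Because $v_C$ is assembled from the Künneth components (over $C$) of the Chern character, it factors through $\Knum(X \times C)$ whenever $v$ factors through $\Knum(X)$; this yields the ``numerical if $\sigma$ is so'' clause for free, so I would set up $\Lambda_C$ once and carry the numerical refinement along automatically.

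Next I would build an auxiliary weak pre-stability condition $\sigma_0 = (\cA_0, Z_0)$ on $X \times C$ that only sees the curve's rank direction. Its heart $\cA_0$ is obtained by combining the heart $\cA$ on $X$ with the standard heart $\Coh(C)$ on the curve; this genuinely defines a bounded t-structure because $C$ is a curve (so $\Coh(C)$ is hereditary) and external tensor product is compatible with the relevant truncations. The central charge $Z_0$ records $Z$ evaluated on the rank-over-$C$ component, scaled by the parameter $t$ in the imaginary direction; it is only a \emph{weak} pre-stability condition because objects concentrated in the fiber direction can have vanishing imaginary part. I would check that $\sigma_0$ admits Harder--Narasimhan filtrations and a support-property bound, both inherited fiberwise from $\sigma$ on $X$ together with slope stability on $C$. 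I then tilt $\cA_0$ at a suitable negative slope with respect to $\sigma_0$, obtaining a torsion pair $(\cT, \cF)$ and the candidate heart $\cA_C^{L,t} = \langle \cF[1], \cT \rangle$; the genuine central charge $Z_C^{L,s,t}$ is then defined by adding the previously invisible $L$-degree term (weighted by $s$) and rotating, so that in the degenerate case $X = \mathrm{pt}$ the tilt is trivial and one recovers the standard stability condition $(\Coh(C), -\deg_L + s\,\rank + i\,t\,\rank)$ on $C$ --- a sanity check I would use to calibrate the signs and the tilt slope.

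The main obstacle is verifying that the tilted pair $(\cA_C^{L,t}, Z_C^{L,s,t})$ is an honest stability condition, i.e.\ the positivity axiom together with the support property. For positivity I must show that $Z_C^{L,s,t}$ sends every nonzero object of the tilted heart into $\bH \sqcup \bR_{<0}$; this is where the construction is delicate, since it requires a Bogomolov--Gieseker-type inequality combining the support property of $\sigma$ on $X$, the ampleness of $L$, and the strict positivity of $s, t$, applied to the $\sigma_0$-semistable pieces produced by the tilt. The support property of $\sigma_C^{L,s,t}$ would follow from the same circle of ideas, by transporting a quadratic form witnessing the support property of $\sigma$ through the tilt and adding a positive multiple of the curve-degree form. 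Finally, existence of Harder--Narasimhan filtrations is where the hypotheses that $\sigma$ be rational and $s, t \in \bQ_{>0}$ enter decisively: they force the image of $Z_C^{L,s,t}$ to lie in a discrete subgroup of $\bC$, so that the standard Noetherian-heart argument yields HN filtrations on the tilted heart. I expect the positivity/support-property step to be by far the most technical, with the t-structure and HN bookkeeping being routine once the lattice and $\sigma_0$ are correctly set up.
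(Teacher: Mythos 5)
Your outline reproduces the general architecture of Liu's construction that the paper recalls in \S\ref{section-products-with-curves} (a weak pre-stability condition seeing the curve direction, a tilt, then reinstating the remaining component of the charge weighted by $s$, with support property over a doubled lattice), but there is a genuine gap at the foundational step: the construction of the heart on $X \times C$. You propose to obtain $\cA_0$ by ``combining'' $\cA$ with $\Coh(C)$, asserting that this defines a bounded t-structure because $\Coh(C)$ is hereditary and external tensor product is compatible with truncations. No such argument exists: a general object of $\Db(X \times C)$ admits no decomposition compatible with box products $A \boxtimes B$, so neither the existence of truncation triangles nor boundedness of the would-be t-structure follows from heredity of $\Coh(C)$, and this is exactly the hard technical core of the theorem rather than routine bookkeeping. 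The construction actually used (Abramovich--Polishchuk \cite{AP}, Polishchuk \cite{polishchuk}, as recalled in Theorem~\ref{theorem-ZSL}) defines the heart by a pushforward criterion, $\cA_C = \set{E \st p_*(E \otimes q^*L^{n}) \in \cA \text{ for } n \gg 0}$, and the fact that this is a noetherian heart of a bounded t-structure is a substantial theorem whose proof requires $\cA$ itself to be noetherian. This also shows that you have misplaced the role of the rationality hypothesis: rationality of $\sigma$ is needed at the outset to guarantee noetherianity of $\cA$ so that this machinery applies, not merely at the end to make the image of $Z_C^{L,s,t}$ discrete for Harder--Narasimhan purposes.

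Two further points. First, the support property cannot simply be asserted with respect to $\Lambda \oplus \Lambda$: the second component of $v_C$ enters the central charge only through $Z$, so on semistable objects one has no control over its $\ker(Z)$-part; this is why the paper, following \cite{products}, works with $\Lambda_C = \Lambda \oplus \Lambda/\ker(Z)$, and your stronger claim is unjustified. Second, your appeal to a ``Bogomolov--Gieseker-type inequality'' for positivity points in the wrong direction. No such inequality is available or needed here; the reason the curve case works --- and the reason products with higher-dimensional factors remain open --- is that the polynomial $n \mapsto Z(p_*(E \otimes q^*L^{n}))$ is \emph{linear} in $n$, so its two coefficients $a(E) + i c(E)$ and $b(E) + i d(E)$ can be manipulated directly, as in the formulas for $Z_C^{L,t}$ and $Z_C^{L,s,t}$ recalled after Theorem~\ref{theorem-ZSL}; positivity and the support property are then deduced in \cite{products} from these coefficients together with the support property of $\sigma$, not from a Bogomolov--Gieseker-type statement.
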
 
 
We will recall the construction of $\sigma_C^{L,s,t}$ more precisely below. 
The starting point is the following result. 

\begin{theorem}
\label{theorem-ZSL}
Let $X$ be a variety and 
let $\sigma = (\cA, Z)$ be a stability condition on $X$ whose heart $\cA$ is noetherian. 
Let $S$ be a smooth projective variety with an ample line bundle $L$. 
Let $p \colon X \times S \to X$ and $q \colon X \times S \to S$ be the projections. 
\begin{enumerate}
\item \label{AS}
There is a noetherian heart of a bounded t-structure on $\Db(X \times S)$ given by 
\begin{equation*} 
\cA_S = \set{E \in \Db(X \times S) \st p_*(E \otimes q^*L^{n}) \in \cA \text{ for } n \gg 0 } , 
\end{equation*} 
which does not depend on the choice of $L$. 
\item \label{ZSL} 
For $E \in \rK_0(X \times S)$ there is a polynomial $Z_S(E) \in \bC[x]$ of degree at most $\dim(S)$ 
given by 
\begin{equation*}
Z^L_S(E)(n) = Z(p_*(E \otimes q^*L^{n})) 
\end{equation*} 
for $n \in \bZ$, whose leading coefficient gives a weak pre-stability condition with heart $\cA_S$. 
If $X$ is smooth and proper and $\sigma$ is numerical, 
then $Z^L_S$ only depends on the classes of $E$ in $\Knum(X \times S)$ and of $L$ in $\Knum(S)$. 
\end{enumerate}
\end{theorem}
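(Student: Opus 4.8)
The plan is to treat the two parts in turn: part~\eqref{AS} as an instance of the constant (``fiberwise'') t-structure construction, and part~\eqref{ZSL} as a polynomial central charge whose leading term is controlled by the generic fiber over $S$. For part~\eqref{AS}, the baseline case $X = \mathrm{pt}$, $\cA = \Coh(\mathrm{pt})$ already shows via Serre vanishing that $\cA_S = \Coh(S)$, so in general $\cA_S$ should consist of complexes that are $\cA$ in the $X$-direction and coherent in the $S$-direction. To construct the t-structure I would invoke the constant-families-of-t-structures theory of Abramovich--Polishchuk: from the noetherian heart $\cA$ on $\Db(X)$ one obtains a bounded, noetherian t-structure on $\Db(X \times S)$ whose aisle and coaisle are cut out by membership conditions on $p_*(E \otimes q^* L^n)$ for $n \gg 0$, with heart exactly $\cA_S$. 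Noetherianity is inherited from $\cA$ together with projectivity of $S$, and independence of $L$ holds because this heart has an intrinsic fiberwise description, the large-twist pushforward condition being merely one computation of it; comparing two ample bundles by mutual domination of their powers plus Serre vanishing then shows the two conditions define the same subcategory.

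For part~\eqref{ZSL} I would first prove polynomiality. Writing $d = \dim S$ and $u = [L] - 1 \in \rK_0(S)$, the class $u$ lies in codimension $\geq 1$ for the (multiplicative) topological filtration, so $u^{d+1} = 0$ and $[L^n] = (1+u)^n = \sum_{k \leq d} \binom{n}{k} u^k$. Pushing forward $E \otimes q^* u^k$ to $X$ and applying the additive homomorphism $Z \circ v$ term by term expresses $Z_S^L(E)(n)$ as a $\bZ$-linear combination of the polynomials $\binom{n}{k}$, hence a polynomial of degree $\leq d$. Its leading coefficient is $\tfrac{1}{d!}\,Z(v(p_*(E \otimes q^* u^{d})))$, and since $u^{d}$ is a zero-dimensional class of length $\ell = \int_S c_1(L)^{d} > 0$, this equals $\tfrac{\ell}{d!}\,Z(v(i_s^* E))$, where $i_s \colon X \times \{s\} \hookrightarrow X \times S$ is the inclusion of a generic slice.

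The crux is then to check that this leading coefficient $W$ is a weak stability function on $\cA_S$, and here the generic-fiber formula is decisive: for $E \in \cA_S$ the derived restriction $i_s^* E$ lies in $\cA$ for generic $s$ by the fiberwise characterization of the constant heart, so $Z(v(i_s^* E)) \in \bH \sqcup \bR_{<0}$ when $i_s^* E \neq 0$ and is $0$ otherwise; multiplying by the positive constant $\ell/d!$ places $W(E) \in \bH \sqcup \bR_{\leq 0}$. The possibility $W(E) = 0$, occurring exactly when the generic fiber vanishes, is why the resulting condition is only weak. Harder--Narasimhan filtrations then follow from the noetherianity of $\cA_S$ established in part~\eqref{AS} together with discreteness of the values of $\Im W$ in the rational (and numerical) setting, via the standard two-step argument for weak stability functions on noetherian hearts. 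Finally, for the numerical refinement I would observe that when $X$ is smooth and proper the operations $q^*$, $\otimes$, and $p_*$ all descend to the numerical Grothendieck groups --- using that $\Knum(X \times S)$ is a ring, that $p$ is proper, and Grothendieck--Riemann--Roch --- so that $Z_S^L(E)(n)$, with $v$ factoring through $\Knum(X)$, depends only on the class of $E$ in $\Knum(X \times S)$ and of $L$ in $\Knum(S)$.

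I expect the main obstacle to be the positivity of $W$, equivalently the identification of the top-degree term of $Z_S^L(E)$ with $Z$ of the generic fiber: this is where the structure of $\cA_S$ genuinely enters, through the fact that restriction to a generic slice carries $\cA_S$ into $\cA$. It is cleanest in the numerical case, where Grothendieck--Riemann--Roch makes the class identity $p_*(E \otimes q^* u^{d}) = \ell\,[i_s^* E]$ transparent; the integral and rational cases require a dévissage reducing to objects flat over $S$. By comparison, polynomiality is formal and the numerical refinement is bookkeeping.
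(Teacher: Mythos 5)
The paper does not actually argue this theorem: its proof is a two\-/line citation, with part (1) quoted from Abramovich--Polishchuk and Polishchuk, the first sentence of part (2) quoted from Liu \cite[Theorem 3.3]{products}, and the numerical refinement noted to follow directly from the definition. Your proposal is thus a blind reconstruction of Liu's proof, and its architecture --- the constant t-structure for part (1), polynomiality from the nilpotence of $u = [L]-1$, positivity of the top coefficient via restriction to generic slices, HN filtrations from noetherianity, and Grothendieck--Riemann--Roch bookkeeping for the numerical claim --- is the right one; part (1) and the numerical refinement are fine as stated.

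However, the step you yourself call decisive contains a genuine error. The identity $p_*(E \otimes q^*u^d) = \ell\,[i_s^*E]$ in $\rK_0(X)$, and with it the formula $W(E) = \tfrac{\ell}{d!}\,Z(v(i_s^*E))$ for the top coefficient, is false: take $X = S$ an elliptic curve, $E = \cO_\Delta$, $L = \cO_S(o)$, so that $d = \ell = 1$ and $u = [\cO_o]$; then $p_*(E \otimes q^*u) = [\cO_o]$ while $[i_s^*E] = [\cO_s]$, and $[\cO_o] \neq [\cO_s]$ in $\rK_0(X)$ for $s \neq o$ because their determinants differ. In particular the right-hand side of your formula depends on the slice chosen. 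Since the theorem assumes nothing numerical about $\sigma$ (only a noetherian heart), and the numerical case is singled out separately in the statement, this cannot be dismissed; the ``d\'evissage'' you defer to is precisely the substantive content of Liu's theorem, not bookkeeping. The gap is repairable without changing your strategy: after replacing $L$ by a power $L^m$ (harmless, since this rescales $n \mapsto mn$ and multiplies the top coefficient by $m^d > 0$) one may assume $L$ very ample, write $u^d = \sum_{j=1}^{\ell}[\cO_{x_j}]$ for the points $x_j$ of a generic complete intersection of members of $|L|$, and obtain the canonical formula $W(E) = \tfrac{1}{d!}\sum_{j}Z(v(i_{x_j}^*E))$; positivity then follows because each $i_{x_j}^*E$ lies in $\cA$ or vanishes and $\bH \sqcup \bR_{\leq 0}$ is closed under addition, with no need to identify the summands with one another. (Alternatively, the single-slice formula can be salvaged by observing that differences of fibers in an algebraic family lie in a divisible subgroup of $\rK_0(X)$ --- the image of $\Pic^0$ of a connecting curve under the K-theoretic integral transform --- hence are killed by $v$ once $\Lambda$ is free of finite rank, as the support property requires; but no such argument appears in your proposal.) Two further points in the same step: the fact that objects of $\cA_S$ restrict to $\cA$ on generic slices is not the definition of $\cA_S$ and itself requires the torsion-theoretic d\'evissage over $S$; and your HN argument invokes discreteness of the image of $\Im W$, i.e.\ rationality of $\sigma$, which the statement does not grant, so existence of HN filtrations is also left open in the stated generality.
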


\begin{proof}
\eqref{AS} is \cite{AP, polishchuk}. 
The first sentence of~\eqref{ZSL} is \cite[Theorem 3.3]{products}, 
while the second follows directly from the definition.   
\end{proof}

\begin{construction}
Using the preceding result, let us review the construction of the stability condition $\sigma_C^{L,s,t}$ from Theorem~\ref{theorem-product}. 
For $E \in \rK_0(X \times C)$, the polynomial $Z_C^L(E)(n)$ is linear and hence may be written as 
\begin{equation*}
Z_C^L(E)(n) = a(E)n + b(E) + i(c(E)n + d(E))
\end{equation*}
for homomorphisms $a,b,c,d \colon \rK_0(X \times C) \to \bR$. 
Define $Z_C^{L,t} \colon \rK_0(X \times C) \to \bC$ by 
\begin{equation*}
Z_C^{L,t}(E) = a(E)t - d(E) + ic(E)t. 
\end{equation*} 
Then the pair $\sigma_t = (\cA_C, Z_{C}^{L,t})$ is a weak pre-stability condition \cite[Lemma 4.1]{products}, so we may tilt at slope $0$ to obtain a new heart $\cA_{C}^{L,t}$. 
Define $Z_C^{L,s,t} \colon \rK_0(X \times C) \to \bC$ as 
\begin{equation*}
Z_C^{L,s,t}(E) = c(E)s + b(E) + i(-a(E)t + d(E)). 
\end{equation*} 
Then $\sigma_{C}^{L,s,t} = (\cA_C^{L, t}, Z_C^{L,s,t})$ is a stability condition by \cite[Theorem 4.7 and Theorem~5.9]{products}. 
To be more precise, suppose that $\sigma$ is a stability condition with respect to a homomorphism $v \colon \rK_0(X) \to \Lambda$. 
Then by \cite[Lemma 5.2]{products}, the central charge $Z_C^{L,s,t}$ factors as 
\begin{equation*}
\begin{tikzcd}[ampersand replacement=\&]
Z_C^{L,s,t} \colon \rK_0(X \times C) 
 \arrow{r}{ \begin{pmatrix} v^L_1 \\ v^L_2 \end{pmatrix} } \& \Lambda \oplus \Lambda  \arrow{rrr}{\begin{pmatrix} s \Im Z - i t \Re Z & Z \end{pmatrix} } \& \& \& \bC
\end{tikzcd}
\end{equation*} 
where $v^L_1, v^L_2 \colon \rK_0(X) \to \Lambda$ are homomorphisms given by 
\begin{align*}
v^L_1(E) & = v(p_*(E \otimes q^* L^n)) - v(p_*(E \otimes q^*L^{n-1})) , \\ 
v^L_2(E) & = v(p_*(E \otimes q^* L^n)) - n \cdot v^L_1(E), 
\end{align*} 
for $n \gg 0$. 
Therefore, writing $\Lambda_C = \Lambda \oplus \Lambda/\ker(Z)$ and $v^L_C \colon \rK_0(X \times C) \to \Lambda_C$ for the composition of $\begin{pmatrix} v^L_1 \\ v^L_2 \end{pmatrix}$ with the projection to $\Lambda_C$, we also obtain a factorization 
\begin{equation*}
\begin{tikzcd}[ampersand replacement=\&]
Z_C^{L,s,t} \colon \rK_0(X \times C) 
 \arrow{r}{ v_C^L } \& \Lambda_C \arrow{rrr}{\begin{pmatrix} s \Im Z - i t \Re Z & Z \end{pmatrix} } \& \& \& \bC. 
\end{tikzcd}
\end{equation*} 
By \cite[Lemma 5.7 and Remark 5.8]{products}, 
$\sigma_C^{L,s,t}$ is a stability condition with respect to $v^L_C$. 
Note that by construction, if $\sigma$ is numerical then so is $\sigma_C^{L,s,t}$. 
\end{construction} 

Next we observe that the above construction behaves well with respect to group actions. 
Note that if $\Gamma$ is a group acting on $X \times S$, then there is an induced action on the set 
of homomorphisms $W \colon \rK_0(X \times S) \to \bC[x]$; namely, $\gamma \in \Gamma$ acts on $W$ 
by $\gamma \cdot W = W \circ \gamma^*$. 

\begin{lemma}
\label{lemma-ZSL-equivariant}
In the situation of Theorem~\ref{theorem-ZSL}, 
assume that $G$ is a group acting on $X$, 
$H$ is a group acting on $S$, 
$\Gamma$ is a group acting on $X \times S$, 
and $\phi \colon \Gamma \to G$ and $\psi \colon \Gamma \to H$ are 
homomorphisms such that the projections $p \colon X \times S \to X$ and 
$q \colon X \times S \to S$ are equivariant for the group actions. 
Let $\gamma \in \Gamma$ and set $g = \phi(\gamma) \in G$ and $h = \psi(\gamma) \in H$. 
\begin{enumerate}
\item We have $\gamma \cdot \cA_S = (g \cdot \cA)_S$, where the right side denotes the construction of Theorem~\ref{theorem-ZSL}\eqref{AS} applied to the heart $g \cdot \cA$. 
\item We have 
$\gamma \cdot Z_S^{L} = (g \cdot Z)_{S}^{h_*L}$, 
where the right side denotes the construction of Theorem~\ref{theorem-ZSL}\eqref{ZSL} applied to the stability condition 
$g \cdot \sigma = (g \cdot \cA, g \cdot Z)$ with the line bundle $h_*L$. 
\end{enumerate}
In particular, if $\sigma$ is $G$-invariant and $L$ is $H$-invariant 
(or if the class of $L$ in $\Knum(S)$ is $H$-invariant in case $\sigma$ is numerical), 
then $\cA_S$ and $Z_S^{L}$ are $\Gamma$-invariant. 
\end{lemma}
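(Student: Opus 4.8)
The plan is to prove the two equivariance statements directly from the defining formulas in Theorem~\ref{theorem-ZSL}, tracing through how the group element $\gamma$ acts via its images $g$ and $h$ under $\phi$ and $\psi$. First I would unwind the definitions: recall that $\gamma \cdot \cA_S$ means $\gamma_* \cA_S$, and I want to show an object $E$ lies in this precisely when it lies in $(g \cdot \cA)_S$. By the defining characterization, $E \in (g \cdot \cA)_S$ iff $p_*(E \otimes q^* L^n) \in g \cdot \cA = g_* \cA$ for $n \gg 0$, and $E \in \gamma_* \cA_S$ iff $\gamma^* E \in \cA_S$ iff $p_*(\gamma^* E \otimes q^* L^n) \in \cA$ for $n \gg 0$. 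So the crux is a base-change--type identity relating $p_*(\gamma^* E \otimes q^* L^n)$ to $g^* p_*(E \otimes q^* L^n)$, up to twisting $L$ by $h$.

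The key computation is the following chain of isomorphisms, which I would establish using the equivariance of $p$ and $q$ (so that $p \circ \gamma = g \circ p$ and $q \circ \gamma = h \circ q$, hence $\gamma^* q^* = q^* h^*$ and $p_* \gamma^* = g^* p_*$):
\begin{equation*}
p_*(\gamma^* E \otimes q^* L^n) \simeq p_* \gamma^*(E \otimes q^* (h_* L)^n) \simeq g^* p_* (E \otimes q^*(h_*L)^n).
\end{equation*}
Here I would be careful that pulling $q^*L^n$ through $\gamma^*$ replaces $L$ by $h^* L$, which is the same (up to the relevant identification on line bundles, or on numerical classes in the numerical case) as working with $h_* L$; this is where the appearance of $h_* L$ rather than $L$ itself comes from. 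Granting this isomorphism, membership of $p_*(\gamma^* E \otimes q^* L^n)$ in $\cA$ is equivalent to membership of $p_*(E \otimes q^*(h_*L)^n)$ in $g \cdot \cA = g_* \cA$, which is exactly the condition defining $(g \cdot \cA)_S$ with line bundle $h_* L$; this proves part (1). For part (2) I would apply the very same isomorphism inside the central charge formula: since $\gamma \cdot Z_S^L(E)(n) = Z_S^L(\gamma^* E)(n) = Z\bigl(p_*(\gamma^* E \otimes q^* L^n)\bigr)$, substituting the isomorphism and using $g \cdot Z = Z \circ g^*$ gives $Z\bigl(g^* p_*(E \otimes q^*(h_*L)^n)\bigr) = (g \cdot Z)\bigl(p_*(E \otimes q^*(h_*L)^n)\bigr) = (g \cdot Z)_S^{h_* L}(E)(n)$, as claimed.

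The final ``in particular'' clause then follows formally: if $\sigma = (\cA, Z)$ is $G$-invariant then $g \cdot \cA = \cA$ and $g \cdot Z = Z$ for all $g \in G$, and if $L$ (or its numerical class) is $H$-invariant then $h_* L = L$ (respectively they agree in $\Knum(S)$), so parts (1) and (2) immediately give $\gamma \cdot \cA_S = \cA_S$ and $\gamma \cdot Z_S^L = Z_S^L$ for every $\gamma \in \Gamma$; in the numerical case I would invoke the last sentence of Theorem~\ref{theorem-ZSL}\eqref{ZSL}, that $Z_S^L$ depends only on the numerical class of $L$, to reduce to invariance of $[L] \in \Knum(S)$.

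I expect the main obstacle to be bookkeeping the precise form of the base-change isomorphism $p_* \gamma^* \simeq g^* p_*$ and the twist $q^* \gamma^* L \simeq q^*(h^* L)$ with enough care that the roles of pullback versus pushforward on $L$ (i.e. $h^* L$ versus $h_* L$) are correctly matched; since $h$ is an automorphism these differ only by inversion, but getting the direction right is exactly what pins down the statement $\gamma \cdot Z_S^L = (g \cdot Z)_S^{h_* L}$. Everything else is a formal consequence of the equivariance of $p$ and $q$ together with the conservativity and compatibility properties of derived pullback and pushforward along the automorphisms $g$, $h$, $\gamma$, which are automorphisms and hence flat and proper, so no subtle derived-functor issues arise beyond the standard (co)monad identities.
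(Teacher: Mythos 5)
Your proposal is correct and follows essentially the same route as the paper: the entire content is the single chain of isomorphisms $p_*(\gamma^*E \otimes q^*L^n) \simeq p_*\gamma^*(E \otimes q^*(h_*L)^n) \simeq g^*p_*(E \otimes q^*(h_*L)^n)$, derived from the equivariance of $q$ and $p$ (the paper phrases the first step via $q^*L^n \simeq \gamma^*\gamma_*q^*L^n$ and $\gamma_*q^* \simeq q^*h_*$, which is the same manipulation as your $\gamma^*q^* = q^*h^*$ with $h^*h_* = \mathrm{id}$), after which both parts and the ``in particular'' clause follow formally exactly as you describe.
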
 

\begin{proof}
For $E \in \Db(X \times S)$ and $n \in \bZ$, we have
\begin{align*} 
p_*(\gamma^*E \otimes q^*L^n) & \simeq p_*(\gamma^*E \otimes \gamma^* \gamma_* q^* L^n) \\ 
& \simeq p_*(\gamma^*(E \otimes q^* h_*(L)^n)) \\ 
& \simeq  g^* p_* (E \otimes q^* h_*(L)^n) 
\end{align*} 
where the second and third lines hold by the equivariance of $q$ and $p$. 
The result follows. 
\end{proof} 

\begin{lemma}
\label{lemma-product-invariant}
In the situation of Theorem~\ref{theorem-product}, let $v \colon \rK_0(X) \to \Lambda$ be the homomorphism with respect to which $\sigma$ is a stability condition. 
Assume that 
$G$ is a group acting on $X$ and $\Lambda$ such that 
$v \colon \rK_0(X) \to \Lambda$ is $G$-equivariant and $\sigma$ is $G$-invariant, 
$H$ is a group acting on $C$, either $\sigma$ is numerical or $L$ is $H$-invariant, 
$\Gamma$ is a group acting on $X \times C$, 
and $\phi \colon \Gamma \to G$ and $\psi \colon \Gamma \to H$ are 
homomorphisms such that the projections $p \colon X \times C \to X$ and 
$q \colon X \times C \to C$ are equivariant. 
\begin{enumerate}
\item \label{product-invariant-vC}
The homomorphism $v^L_C \colon \rK_0(X \times C) \to \Lambda_C$ is $\Gamma$-equivariant, 
where $\Gamma$ acts on $\Lambda_{C}$ via the homomorphism 
$\phi \colon \Gamma \to G$ and the diagonal action of $G$ on $\Lambda \oplus \Lambda$. 
\item \label{product-invariant-sigmaC}
For $s,t \in \bQ_{> 0}$, 
the stability condition $\sigma_{C}^{L, s,t} = (\cA_{C}^{L, t}, Z_C^{L,s,t})$ is $\Gamma$-invariant. 
\end{enumerate} 
\end{lemma}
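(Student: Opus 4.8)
The plan is to deduce both parts from Lemma~\ref{lemma-ZSL-equivariant} together with the $G$-invariance of $\sigma$, the only genuinely new input being the reduction of the twisted line bundle $h_*L$ back to $L$. Throughout, for $\gamma \in \Gamma$ I write $g = \phi(\gamma) \in G$ and $h = \psi(\gamma) \in H$, and I implement the $\Gamma$-action by the autoequivalence $\gamma^*$ of $\Db(X \times C)$.

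For part~\eqref{product-invariant-vC}, I would start from the identity proved inside Lemma~\ref{lemma-ZSL-equivariant}, namely
\[
p_*(\gamma^* E \otimes q^* L^n) \simeq g^* p_*(E \otimes q^* (h_* L)^n)
\]
for all $n$. Applying the $G$-equivariant homomorphism $v$ and unwinding the defining formulas for $v^L_1$ and $v^L_2$ gives $v^L_1(\gamma^* E) = g \cdot v^{h_* L}_1(E)$ and $v^L_2(\gamma^* E) = g \cdot v^{h_* L}_2(E)$, where the superscript records the twisting bundle. It then remains to replace $h_* L$ by $L$, and here the hypotheses split into cases: if $L$ is $H$-invariant then $h_* L \simeq L$ directly; if instead $\sigma$ is numerical, then $v$ factors through $\Knum$, so $v^L_1$ and $v^L_2$ depend only on the class of $L$ in $\Knum(C)$ (Theorem~\ref{theorem-ZSL}\eqref{ZSL}), and since $h$ is an automorphism of a curve we have $[h_* L] = [L] \in \Knum(C)$, both bundles having the same rank and degree. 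In either case $v^{h_*L}_i = v^L_i$, so $(v^L_1, v^L_2)(\gamma^* E)$ is the diagonal $g$-action applied to $(v^L_1, v^L_2)(E)$. Since the subgroup defining $\Lambda_C = (\Lambda \oplus \Lambda)/\ker$ is cut out by the $G$-invariant charge $Z$, the diagonal action descends to $\Lambda_C$ and the projection is $G$-equivariant, whence $v^L_C$ is $\Gamma$-equivariant.

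For part~\eqref{product-invariant-sigmaC}, I would treat the central charge and the heart separately. For the central charge, recall $Z_C^{L,s,t} = W \circ v^L_C$ with $W = \begin{pmatrix} s\Im Z - it\Re Z & Z\end{pmatrix}$. As $\sigma$ is $G$-invariant, $Z$ (hence $\Re Z$ and $\Im Z$) is fixed by the $G$-action on the image of $v$, so $W$ is invariant for the diagonal action on the sublattice containing the image of $v^L_C$; combined with part~\eqref{product-invariant-vC} this yields $Z_C^{L,s,t}(\gamma^* E) = W(g \cdot v^L_C(E)) = W(v^L_C(E)) = Z_C^{L,s,t}(E)$. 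For the heart, I would first note $\gamma \cdot \cA_C = (g \cdot \cA)_C = \cA_C$ by Lemma~\ref{lemma-ZSL-equivariant} and $g \cdot \cA = \cA$. Next, the same $h_* L \leftrightarrow L$ reduction applied to the polynomial-valued $Z_C^L$ via Lemma~\ref{lemma-ZSL-equivariant}\eqref{ZSL} gives $\gamma \cdot Z_C^L = Z_C^L$, so its coefficients $a,b,c,d$ are $\Gamma$-invariant and hence so is the weak central charge $Z_C^{L,t}$. Thus $\gamma^*$ preserves the weak pre-stability condition $\sigma_t = (\cA_C, Z_C^{L,t})$, and since it preserves $\cA_C$ and the slope $\mu_{Z_C^{L,t}}$ it preserves the torsion pair used to tilt at slope $0$, hence the tilted heart $\cA_C^{L,t}$. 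Combining, $\gamma \cdot \sigma_C^{L,s,t} = (\cA_C^{L,t}, Z_C^{L,s,t}) = \sigma_C^{L,s,t}$.

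The main obstacle is the bookkeeping in the two cases of the $h_* L \leftrightarrow L$ reduction: everything else is a formal consequence of Lemma~\ref{lemma-ZSL-equivariant} and the $G$-invariance of $\sigma$, but one must verify that replacing $L$ by $h_*L$ leaves the invariants $v^L_i$ (equivalently the polynomial $Z_C^L$) unchanged, which in the numerical case rests on automorphisms of a curve preserving $\Knum$-classes of line bundles. The only other points requiring care are that the diagonal $G$-action genuinely descends to $\Lambda_C$ (which holds since the relation defining $\Lambda_C$ is built from the $G$-invariant charge $Z$) and that tilting commutes with the $\sigma_t$-preserving autoequivalence $\gamma^*$, which is standard.
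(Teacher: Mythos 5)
Your proposal is correct and follows essentially the same route as the paper: both reduce everything to Lemma~\ref{lemma-ZSL-equivariant}, handle the $h_*L$ versus $L$ discrepancy by noting that automorphisms of a curve act trivially on $\Knum(C)$ (so the numerical hypothesis suffices), and observe that $Z_C^{L,t}$, $Z_C^{L,s,t}$, and the tilted heart $\cA_C^{L,t}$ are all built functorially from the $\Gamma$-invariant data $(\cA_C, Z_C^L)$. The extra details you supply --- the descent of the diagonal $G$-action to $\Lambda_C$ via $G$-stability of $\ker(Z)$, and the preservation of the torsion pair under tilting --- are points the paper leaves implicit, and your treatment of them is sound.
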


\begin{proof}
Note that since $C$ is a smooth projective curve, automorphisms of $C$ act trivially on $\Knum(C)$, 
so the class of $L$ in $\Knum(C)$ is automatically $H$-invariant. 
Then by the same argument as in the proof of Lemma~\ref{lemma-ZSL-equivariant}, we obtain the claim~\eqref{product-invariant-vC}.

The heart $\cA_C^{L,t}$ is constructed by tilting from the pair $\sigma_t = (\cA_C, Z_C^{L,t})$, and both $Z_C^{L,t}$ and $Z_C^{L,s,t}$ are constructed from the coefficients of $Z_C^L$; thus, the $\Gamma$-invariance of $\sigma_{C}^{L, s,t}$ follows from Lemma~\ref{lemma-ZSL-equivariant}. 
\end{proof}

Let us note that in many cases where stability conditions are known to exist, it is easy to arrange that they are fixed by a group action. 

\begin{example}
\label{example-stability-curve}
Let $C$ be a smooth  projective curve. 
Then the standard full numerical stability condition 
$\sigma = (\Coh(C), - \deg + i \, \rank )$ 
is fixed under automorphisms of~$C$. 
Indeed, automorphisms of $C$ act trivially on $\Knum(C)$. 
\end{example} 

\begin{example}
\label{example-invariant-stability-surface}
Let $X$ be a smooth projective surface,  
let $\omega$ be a real ample divisor on $X$, and let $D$ be a real divisor on $X$. 
Let $\ch^D= e^{-D}\ch$ be the twisted Chern character, where $e^{-D}$ is the formal exponential of the divisor class $-D$. 
Define $\Coh^{\omega, D}(X) \subset \Db(X)$ to be the heart of a bounded t-structure obtained by tilting the weak pre-stability condition $(\Coh(X), -\omega \ch_1^D + i \omega^2 \ch_0^D)$ at slope $0$, and 
define $Z_{\omega, D} \colon \rK_0(X) \to \bC$ by 
\begin{equation*}
Z_{\omega,D} = \left( -\ch_2^D + \frac{\omega^2}{2} \ch_0^D\right) +  i \omega \ch_1^D.
\end{equation*}
Then by \cite{bridgeland-K3, arcara-bertram} (see also \cite[\S6]{macri-schmidt} for an exposition), 
$\sigma_{\omega, D} = (\Coh^{\omega, D}(X), Z_{\omega, D})$ is a full numerical stability condition on $X$, 
which is rational when $\omega$ and $D$ are rational. 

Now suppose that a finite group $G$ acts on $X$. 
If the classes of $\omega$ and $D$ in the N\'{e}ron--Severi group are fixed by $G$, then 
$\sigma_{\omega, D}$ is also fixed by $G$. 
Indeed, in this case $G$ fixes the homomorphism $-\omega \ch_1^D + i \omega^2 \ch_0^D$, hence also the heart $\Coh^{\omega, D}(X)$, and $G$ also fixes the central charge $Z_{\omega, D}$. 
Note that if $\omega$ and $D$ are not fixed by $G$, then we can always replace them with 
$\sum_{g \in G} g^*(\omega)$ and $\sum_{g \in G} g^*(D)$ to get $G$-invariant classes. 
\end{example} 

\subsection{Proof of Theorem~\ref{theorem-stability-conditions}}
Combining the results from \S\ref{section-stability-group-actions}-\S\ref{section-products-with-curves} gives the key remaining ingredient for Theorem~\ref{theorem-stability-conditions}. 

\begin{proposition}
\label{proposition-Ci}
Let $X$ be a smooth projective variety with an action by a finite group $G$. 
Let $\sigma$ be a $G$-invariant rational stability condition on $X$. 
For $1 \leq i \leq n$, let $C_i$ be a smooth projective curve with an action by a finite group $G_i$. 
\begin{enumerate}
\item \label{Ci-Gi}
There exists a $(G \times G_1 \times \cdots \times G_n)$-invariant rational stability condition on 
$X \times C_1 \times \cdots \times C_n$, which is numerical if $\sigma$ is so. 
\item \label{Ci-Gamma}
For any subgroup $\Gamma \subset \prod_{i = 1}^n G_i$, there exists a 
stability condition on $[(X \times C_1 \times \cdots \times C_n)/ \Gamma]$, 
which is numerical if $\sigma$ is so. 
\end{enumerate}
\end{proposition}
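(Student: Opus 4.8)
The plan is to deduce both claims from the results of \S\ref{section-stability-group-actions}--\S\ref{section-products-with-curves}: claim~\eqref{Ci-Gi} by iterating Liu's product construction (Theorem~\ref{theorem-product}) while tracking equivariance through Lemma~\ref{lemma-product-invariant}, and claim~\eqref{Ci-Gamma} by descending the resulting invariant stability condition along the quotient map via Theorem~\ref{theorem-stability-G}.

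For~\eqref{Ci-Gi} I would induct on $n$. Writing $Y_i = X \times C_1 \times \cdots \times C_i$, the base case $Y_0 = X$ is the given $\sigma$, taken with respect to the $G$-equivariant homomorphism $v \colon \rK_0(X) \to \Lambda$. Assume inductively that we have a $(G \times G_1 \times \cdots \times G_{i-1})$-invariant rational stability condition $\sigma^{(i-1)}$ on the smooth projective variety $Y_{i-1}$, taken with respect to a lattice $\Lambda^{(i-1)}$ carrying a compatible group action and an equivariant charge map $v^{(i-1)}$, numerical whenever $\sigma$ is. Applying Theorem~\ref{theorem-product} to $\sigma^{(i-1)}$ and $C_i$, for a choice of ample $L_i$ and parameters $s_i, t_i \in \bQ_{>0}$, produces a rational stability condition $\sigma^{(i)}$ on $Y_i = Y_{i-1} \times C_i$, numerical whenever $\sigma$ is. To keep track of invariance I would invoke Lemma~\ref{lemma-product-invariant}, with $Y_{i-1}$ playing the role of the variety $X$ there, the group $G \times G_1 \times \cdots \times G_{i-1}$ that of $G$, the curve $C_i$ that of $C$, the group $G_i$ that of $H$, and $\Gamma = G \times G_1 \times \cdots \times G_i$, where $\phi,\psi$ are the evident projections and $p,q$ the product projections; the latter are manifestly $\Gamma$-equivariant. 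When $\sigma$ is not numerical, the lemma requires $L_i$ to be $G_i$-invariant, which I arrange by replacing an arbitrary ample bundle $L_0$ on $C_i$ by the averaged bundle $\bigotimes_{h \in G_i} h^* L_0$; when $\sigma$ is numerical no such choice is needed, since automorphisms act trivially on $\Knum(C_i)$. Then Lemma~\ref{lemma-product-invariant}\eqref{product-invariant-sigmaC} gives that $\sigma^{(i)}$ is $(G \times G_1 \times \cdots \times G_i)$-invariant, while part~\eqref{product-invariant-vC} supplies the equivariant lattice $\Lambda^{(i)}$ and charge map $v^{(i)}$ needed to continue the induction. After $n$ steps this yields the desired stability condition on $X \times C_1 \times \cdots \times C_n$.

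For~\eqref{Ci-Gamma} I would regard $\Gamma \subset \prod_{i=1}^n G_i$ as a subgroup of $G \times G_1 \times \cdots \times G_n$, acting on $X \times C_1 \times \cdots \times C_n$ trivially on the factor $X$ and through its inclusion on the curve factors. The stability condition produced in~\eqref{Ci-Gi} is invariant for the full group $G \times G_1 \times \cdots \times G_n$, hence a fortiori $\Gamma$-invariant, and it is a stability condition with respect to a $\Gamma$-equivariant homomorphism into $\Lambda^{(n)}$. Theorem~\ref{theorem-stability-G} then descends it to a stability condition on the quotient stack $[(X \times C_1 \times \cdots \times C_n)/\Gamma]$, which is numerical whenever $\sigma$ is.

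The substantive content is carried entirely by the cited inputs, so the proposition is essentially an assembly of them; the only point requiring care is the inductive bookkeeping in~\eqref{Ci-Gi}, namely maintaining at each stage a lattice equipped with a compatible group action together with an equivariant charge map, so that the hypotheses of Lemma~\ref{lemma-product-invariant} are literally satisfied at the next step when the variety playing the role of $X$ is itself a product assembled in earlier steps. I expect no genuine obstacle beyond verifying these compatibilities and the equivariance of the projection morphisms.
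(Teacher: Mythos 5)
Your proposal is correct and follows essentially the same route as the paper: claim~\eqref{Ci-Gi} is obtained by applying Theorem~\ref{theorem-product} together with Lemma~\ref{lemma-product-invariant} one curve factor at a time, and claim~\eqref{Ci-Gamma} then follows from Theorem~\ref{theorem-stability-G}. Your additional bookkeeping (the equivariant lattices $\Lambda^{(i)}$ from Lemma~\ref{lemma-product-invariant}\eqref{product-invariant-vC} and the averaging of the ample bundle in the non-numerical case) correctly fills in details the paper leaves implicit.
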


\begin{proof}
The claim \eqref{Ci-Gi} follows by applying Theorem~\ref{theorem-product} and Lemma~\ref{lemma-product-invariant} 
one factor of $C_i$ at a time, while 
\eqref{Ci-Gamma} follows from~\eqref{Ci-Gi} and Theorem~\ref{theorem-stability-G}. 
\end{proof}

In the setup of Theorem~\ref{theorem-stability-conditions}, 
Proposition~\ref{proposition-Ci} gives a stability condition on the quotient stack 
$[(X \times C_1 \times \cdots \times C_n)/ G_{n+1}]$, which is numerical if $\sigma$ is so. 
Since this quotient stack is derived equivalent to $\cyclicquotient(X, C_1, \dots, C_n)$ by Theorem~\ref{theorem-derived-equivalence}, we obtain a stability condition $\tilde{\sigma}$ on 
$\cyclicquotient(X, C_1, \dots, C_n)$. 
If the Albanese morphism of $X$ is finite and all $C_i$ have genus at least $1$, 
then \cite[Theorem 1.1]{FLZ} shows that for any numerical stability condition on 
$X \times C_1 \times \cdots C_n$, all skyscraper sheaves of points are stable of the same phase, say $\phi$. 
By Remark \ref{remark-Phi-skyscraper}, the composition of the equivalence with the pullback functor 
\begin{equation*}
\Dperf(\cyclicquotient(X, C_1, \dots, C_n)) \simeq 
\Dperf([(X \times C_1 \times \cdots \times C_n)/ G_{n+1}]) \to \Dperf(X \times C_1 \times \cdots \times C_n)
\end{equation*}
sends any skyscraper sheaf of a point to an object in the extension closure of skyscraper sheaves of points, which is in particular a semistable object of phase $\phi$ by the previous sentence. 
Now part~\eqref{stability-conditions-semi-geometric} of Theorem~\ref{theorem-stability-conditions} follows from the last statement of Theorem~\ref{theorem-stability-G}. \qed


\newcommand{\etalchar}[1]{$^{#1}$}
\providecommand{\bysame}{\leavevmode\hbox to3em{\hrulefill}\thinspace}
\providecommand{\MR}{\relax\ifhmode\unskip\space\fi MR }
\providecommand{\MRhref}[2]{%
  \href{http://www.ams.org/mathscinet-getitem?mr=#1}{#2}
}
\providecommand{\href}[2]{#2}


\end{document}